\def\thm@space@setup{%
  \thm@preskip=\parskip \thm@postskip=0pt
}\makeatother
\newcommand{\innerprod}[2]{\left\langle{#1},{#2}\right\rangle}
\newcommand{\set}[2]{\left\{ #1\ \left| \ #2 \right. \right\}}
\newcommand{\norm}[1]{\lVert#1\rVert}
\newcommand{\tpose}{^{\text{\scalebox{0.75}{$\top$}}\!}}
\newcommand{\bs}[1]{\boldsymbol{#1}}
\title{\Large
Observer design for systems with an energy-preserving non-linearity\thanks{Preliminary versions of some results in this paper have been accepted to appear in the 50th IEEE Conference on Decision and Control, 12th-15th December 2011, Orlando, Florida, USA.  }}
\author{\large Andrew Wynn and Paul Goulart}
\newtheorem{thm}{Theorem}[section]
\newtheorem{lem}[thm]{Lemma}
\newtheorem{prop}[thm]{Proposition}
\newtheorem{alg}[thm]{Algorithm}
\newtheorem{rem}[thm]{Remark}
\begin{document}
\maketitle
\thispagestyle{empty}
\pagestyle{empty}

\begin{abstract}
Observer design is considered for a class of non-linear systems whose non-linear part is energy preserving.  A strategy to construct convergent observers for this class of non-linear system is presented. The approach has the advantage that it is possible, via convex programming, to prove whether the constructed observer converges, in contrast to several existing approaches to observer design for non-linear systems. Finally, the developed methods are applied to the Lorenz attractor and to a low order model for shear fluid flow. 
\end{abstract}

\section{Introduction}

Observer design for non-linear systems is an important and difficult problem. In this paper, observer design is considered for systems whose non-linear part has an energy preserving structure. In particular, 
\begin{equation} \label{nonlinsys}
\begin{array}{cclc} \dot x(t) &= &	Ax(t) + N(x(t))x(t), 	& t \geq 0, \\ 
												y(t) &= &	Cx(t), 							 	& t \geq 0, \\
												x(0) &= &	x_0 \in \mathbb{R}^n. &
\end{array}												
\end{equation}
where $A \in \mathbb{R}^{n\times n}$, $C \in \mathbb{R}^{p \times n}$ and $N:\mathbb{R}^n \rightarrow \mathbb{R}^{n \times n}$ is a linear operator. Furthermore, it is assumed that the non-linearity $N(x)x$ has the {\em energy preserving} property
\begin{equation} \label{energy}
x\tpose N(x)x = 0, \qquad x \in \mathbb{R}^n. 
\end{equation}

Observer design for non-linear systems has received much attention, with approaches falling into two main categories.  One approach, first considered in \cite{Krener} and generalized in \cite{Astolfi,Kazantzis,KrenerXiao,KrenerXiao2}, is to apply a change of co-ordinates to linearize the system, up to an additional term involving the output $y(t)$. Subsequently, linear design methods can be applied to create an observer for the transformed system, then the co-ordinate transformation is inverted to form an observer for the original, non-linear, system. The main drawback of this approach is that it is usually impossible to prove that the chosen co-ordinate transformation is invertible. Hence, while this is a powerful technique for observer design, it is difficult to prove in practice that the constructed observer will actually converge. 

The second approach is to assume a Lipschitz-type bound on the non-linear part of the system. For example, in addition to the standard Lipschitz assumption \cite{Aboky,Rajamani}, one-sided Lipschitz conditions \cite{Hu,XuHu} and a `less conservative' Lipschitz condition \cite{Rajamani2} have been studied. These techniques apply a Luenberger-type observer and require that the non-linearity is `small enough' with respect to the linear part of the dynamics. A major drawback of this approach is that systems with a dominant non-linear term often have a large Lipschitz bound and, if this is the case, it is unlikely to be possible to prove that a given observer converges. 

The difficulties of the above techniques arise either from excessive generality or overly restrictive assumptions. The co-ordinate transformation technique may theoretically be applied to {\em any} non-linear system, and is therefore unlikely to succeed in every case. For the Lipschitz approaches, a small global Lipschitz bound restricts the class of systems to which the results may be applied. For this reason, we aim for an approach to observer design that sits between these two extremes by only considering the particular class of non-linear system \eqref{nonlinsys} whose non-linear part satisfies \eqref{energy}.

The importance of dynamical systems of the form \eqref{nonlinsys}--\eqref{energy} is that they often arise in finite-dimensional approximations of non-linear physical systems, for example, the Navier-Stokes equations in fluid flows \cite{Berkooz} and the non-linear oscillation of beams in structural dynamics \cite{Hodges}. In experimental practice, such approximations are referred to as `low order models' and can be created directly from experimental data by using, for example, the method of Proper Orthogonal Decomposition \cite{Ravindran2,Ravindran}. From a theoretical perspective, it is therefore of great interest to study the control theoretic properties of such systems, with a view to guiding experimental implementation. The link between the Navier-Stokes equations and \eqref{nonlinsys} is presented in Section \ref{examplesSec}.

\textbf{Notation}: The $n$-sphere in $\mathbb{R}^{n+1}$ is defined as $\mathbb{S}^n := \set{x\in \mathbb{R}^n}{\norm{x}_2 = 1}$.   A matrix $P\in \mathbb{R}^{n \times n}$ is said to be {\em positive definite} (written $P \succ 0$) if its symmetric part satisfies $x\tpose(P+P\tpose)x > 0$, for any $x \in \mathbb{R}^n$, and {\em negative definite} if $-P$ is positive definite (written $P \prec 0$). The set of symmetric matrices of dimension $n$ is denoted $\mathcal{S}^n$. For matrices $A, B$ and $C$ of appropriate sizes, the shorthand 
\begin{equation*}
\left[ \begin{array}{cc} A & B \\ (\ast) & C \end{array} \right] := \left[ \begin{array}{cc} A & B \\ B\tpose & C \end{array} \right] 
\end{equation*}
is used to simplify the block matrix. For $r>0$ and $d \in \mathbb{R}^n$, the closed $\|\cdot\|_2$-norm ball centered at $d$ of radius $r$ is denoted 
\[
B_r(d):= \set{ x \in \mathbb{R}^n}{ \|x-d\|_2 \leq r}. 
\] 
For $i=1,\ldots,n$,
\begin{equation*}
\bs{e}_i = \underbrace{(0,\ldots0,1,0,\ldots,0)\tpose}_{1 \; \text{in} \; i^{\text{th}}\; \text{entry}}
\end{equation*}
denotes the $i^{\text{th}}$ element of the standard basis of $\mathbb{R}^n$. For sets $S,T \subset \mathbb{R}^n$ and $\alpha \in \mathbb{R}$,
\begin{equation*}
S\oplus T:=\{ s+ t: s \in S, t \in T\}, \quad \alpha S := \{ \alpha s : s \in S \}. 
\end{equation*}




\section{Observer Design} \label{obsDesSec}

The approach taken in this paper is to exploit and energy preserving properties of the non-linearity in \eqref{nonlinsys} to obtain a method for constructing a convergent observer. In particular, for a given gain matrix $L \in \mathbb{R}^{n \times p}$, the observer $(\hat x(t))_{t \geq 0}$ is assumed to have dynamics
\begin{equation} \label{LuOb}
\dot{\hat x} = A \hat{x} + N(\hat x) \hat x - L(y - C\hat x), \qquad \hat x (0) = \hat x_0 \in \mathbb{R}^n.
\end{equation}
Therefore, the observer error $e:=x-\hat x$ satisfies
\begin{equation} \label{Err}
\dot e = (A+LC)e + N(x)x - N(\hat x)\hat x.
\end{equation}
The aim of this paper is to find a constructive method of calculating $L$ such that 
\begin{equation*}
 e(t) \rightarrow 0, \qquad t \rightarrow \infty.
\end{equation*}
The main results, Theorem \ref{thm:LocalObserver}, Algorithms \ref{alg:ObserverCvgAlg_new} and \ref{alg:ObserverCvgAlg_Iter}, and Theorem \ref{thm:GlobalCvg}, provide methods of constructing such a gain $L$ by solving a series of convex optimization problems. 

\subsection{A state invariant set}

The property of the non-linear system \eqref{nonlinsys} that is advantageous for observer design is that the energy preserving property \eqref{energy} implies the existence of an {\em invariant set} for the system dynamics. A set $S \subset \mathbb{R}^n$ is said to be invariant for the dynamical system \eqref{nonlinsys} if $x(t_0) \in S$ at time $t_0 \geq 0$ implies that $x(t) \in S$ for every subsequent time $t \geq t_0$. Invariant sets for the class of system \eqref{nonlinsys} can be described in terms of perturbations of the linear part $A$ of the system. In the following, given a matrix $A \in \mathbb{R}^{n \times n}$ and a vector $d \in \mathbb{R}^n$ define a perturbed matrix $A_d \in \mathbb{R}^{n \times n}$ by
\begin{equation*}
A_d x:= Ax + N(x)d + N(d)x, \qquad  x \in \mathbb{R}^n. 
\end{equation*}
Subsequently, we make the following assumption.
\begin{itemize}
\item[(A1)] There exists $d \in \mathbb{R}^n$ such that $A_d \prec 0$. 
\end{itemize}
Clearly, assumption (A1) holds if $A \prec 0$. Furthermore, it is shown in Lemma \ref{lem:ODETrappingSet} that (A1) holds for the class of systems representing finite dimensional approximations of fluid flows.   


\begin{lem}\label{lem:StateTrappingSet}
Suppose that there exist $d \in \mathbb{R}^n$ and $\alpha>0$ such that $A_d + \alpha I \preceq 0$. Then $B_r(d)$ is invariant for \eqref{nonlinsys} for any  
\begin{equation} \label{stateRadius}
r \ge \frac{1}{\alpha} \|Ad + N(d)d\|_2.
\end{equation}
\end{lem}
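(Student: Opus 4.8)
The plan is to use the squared distance to $d$ as a Lyapunov-type certificate and to show that it cannot increase once a trajectory reaches the boundary of $B_r(d)$. Concretely, set $V(x) := \tfrac12\|x-d\|_2^2$ and, along a solution $x(\cdot)$ of \eqref{nonlinsys}, compute $\dot V = (x-d)\tpose\bigl(Ax + N(x)x\bigr)$. The key algebraic step is to substitute $x = (x-d) + d$ and exploit the linearity of $N$: writing $z := x-d$ and using $N(z+d) = N(z) + N(d)$, one expands $Ax + N(x)x$ into $Az + N(z)d + N(d)z$ (which is exactly $A_d z$), plus the remainder $Ad + N(d)d$, plus the cubic term $N(z)z$. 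The energy-preserving property \eqref{energy} annihilates the cubic term, since $z\tpose N(z)z = 0$, leaving
\begin{equation*}
\dot V = z\tpose A_d z + z\tpose\bigl(Ad + N(d)d\bigr).
\end{equation*}

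Next I would bound the two terms separately. The hypothesis $A_d + \alpha I \preceq 0$ means, with the paper's sign convention, that $z\tpose\bigl(A_d + A_d\tpose\bigr)z \le -2\alpha\|z\|_2^2$, hence $z\tpose A_d z \le -\alpha\|z\|_2^2$; and Cauchy–Schwarz gives $z\tpose\bigl(Ad + N(d)d\bigr) \le \|z\|_2\,\|Ad + N(d)d\|_2$. Combining these,
\begin{equation*}
\dot V \le \|z\|_2\bigl(-\alpha\|z\|_2 + \|Ad + N(d)d\|_2\bigr),
\end{equation*}
which is nonpositive whenever $\|z\|_2 \ge \tfrac1\alpha\|Ad + N(d)d\|_2$; in particular, if $r$ satisfies \eqref{stateRadius} this inequality holds for every $x$ with $\|x-d\|_2 = r$, and indeed for every $x$ outside $B_r(d)$ as well.

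Finally I would convert this differential inequality into the invariance claim by a standard continuity/contradiction (Nagumo sub-tangentiality) argument: if some trajectory with $x(t_0) \in B_r(d)$ left the ball at a later time, there would be a last instant $\tau$ with $\|x(\tau)-d\|_2 = r$, followed by an interval on which $\|x(t)-d\|_2 > r$; on that interval $\dot V \le 0$, so $V(x(t)) \le V(x(\tau)) = r^2/2$, contradicting that the trajectory has left $B_r(d)$. Local existence of solutions is not an issue, since the right-hand side of \eqref{nonlinsys} is polynomial, hence locally Lipschitz; the invariance then also furnishes global existence for initial conditions in $B_r(d)$. The only points that require care are the correct handling of the linearity of $N$ in the expansion, the symmetric-part sign convention used in passing from $A_d + \alpha I \preceq 0$ to the quadratic-form bound, and the topological invariance argument; none of these presents a serious obstacle.
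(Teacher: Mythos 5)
Your proof is correct and follows essentially the same route as the paper's: the same Lyapunov function $\tfrac12\|x-d\|_2^2$, the same decomposition of the dynamics into $A_d(x-d)$ plus the constant term $Ad+N(d)d$ plus the cubic term killed by \eqref{energy}, and the same Cauchy--Schwarz bound yielding $\dot V \le 0$ outside $B_r(d)$. The only difference is that you spell out the final sub-tangentiality/contradiction step that the paper leaves implicit, which is a welcome bit of extra care rather than a deviation.
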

\begin{proof}
Using the linearity of $N:\mathbb{R}^n \rightarrow \mathbb{R}^{n \times n}$, the system's dynamics can be written
\begin{align*}
\dot x	&= Ax + N(x-d)(x-d) + N(x)d + N(d)x - N(d)d\\
				&= A_d(x-d) + N(x-d)(x-d) + A_d d - N(d)d. 
\end{align*}
For $D(x) :=\frac{1}{2} \|x-d\|_2^2$, the energy preserving property \eqref{energy} implies that
\begin{align*}
							\dot D(x)		 & =		(x-d)\tpose A_d (x-d) + (x-d)\tpose (A d  + N(d)d). \\
													 &\leq	-\alpha \|x-d\|_2^2 + \|x-d\|_2\|A d + N(d)d\|_2.
\end{align*}
Therefore, $\dot D(x) < 0$ whenever $\|x-d\|_2 > r$, with $r$ given by \eqref{stateRadius}. Hence, $B_r(d)$ is invariant for~\eqref{nonlinsys}. 
\end{proof}
If \eqref{nonlinsys} represents a fluid system, an invariant set may be calculated more explicitly, as described in Section \ref{examplesNineSec}. Since $A_d$ is affine in $d$, the condition $ \{ d : A_d  + A_d\tpose \prec 0 \} \neq \emptyset$ can be checked by solving a semidefinite program \cite{Boyd}. 

Ideally, one would like to calculate an invariant ball with the smallest possible radius. However, due to the non-linear dependence of \eqref{stateRadius} upon $(\alpha,d) \in \mathbb{R}\times \mathbb{R}^n$, it is difficult to minimize \eqref{stateRadius} by convex optimization methods.  In order to remove the non-linear dependence upon $d$ from \eqref{stateRadius}, the search can be restricted to vectors $d$ such that \mbox{$N(d)d=0$}. 

We first demonstrate that such vectors always exist. 

\begin{lem}\label{lem:HairyBall}
Suppose that $f : \mathbb{S}^n \to \mathbb{R}^{n+1}$ is a continuous tangent vector field, that is 
\[ x^\top \! f(x)=0, \qquad x \in \mathbb{S}^n,\]
satisfying $f(x) = f(-x)$.  Then $f$ has at least one zero on $\mathbb{S}^n$.
\end{lem}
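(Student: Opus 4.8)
The plan is a proof by contradiction: assuming $f$ never vanishes, I would normalise it to a self-map of $\mathbb{S}^n$ and show that the tangency condition forces this map to have degree $1$, while the symmetry $f(x)=f(-x)$ forces it to have even degree. (For $n=0$ the claim is trivial, since tangency alone gives $f\equiv 0$; so assume $n\ge 1$.)

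Concretely, suppose $f(x)\neq 0$ for every $x\in\mathbb{S}^n$ and set $g(x):=f(x)/\|f(x)\|_2$, a continuous map $g:\mathbb{S}^n\to\mathbb{S}^n$ that still satisfies $x^\top g(x)=0$ and $g(-x)=g(x)$. To exploit the tangency I would check that the renormalised straight-line homotopy
\[
G(x,t)=\frac{(1-t)\,x+t\,g(x)}{\|(1-t)\,x+t\,g(x)\|_2},\qquad (x,t)\in\mathbb{S}^n\times[0,1],
\]
is well defined: because $x^\top g(x)=0$ and $\|x\|_2=\|g(x)\|_2=1$, the squared norm of the numerator equals $(1-t)^2+t^2\geq\tfrac12>0$. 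Since $G(\cdot,0)=\mathrm{id}_{\mathbb{S}^n}$ and $G(\cdot,1)=g$, this shows $\deg g=1$.

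To exploit the symmetry, observe that $g(-x)=g(x)$ lets $g$ factor through the antipodal quotient as $g=\bar g\circ q$, with $q:\mathbb{S}^n\to\mathbb{RP}^n$ the canonical projection and $\bar g:\mathbb{RP}^n\to\mathbb{S}^n$ continuous. On $n$-th integral homology, $H_n(\mathbb{RP}^n;\mathbb{Z})$ vanishes when $n$ is even and equals $\mathbb{Z}$ when $n$ is odd, in which latter case $q_*$ sends a generator of $H_n(\mathbb{S}^n;\mathbb{Z})$ to twice a generator; in both parities the image of $q_*$ lies in $2\,H_n(\mathbb{RP}^n;\mathbb{Z})$. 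Hence $g_*=\bar g_*\circ q_*$ maps a generator of $H_n(\mathbb{S}^n;\mathbb{Z})$ into $2\,H_n(\mathbb{S}^n;\mathbb{Z})$, so $\deg g$ is even, contradicting $\deg g=1$. Therefore $f$ must vanish at some point of $\mathbb{S}^n$.

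The homotopy computation and the factorisation are routine; the ingredient I would flag as the genuine content is the homology fact in the last step, i.e.\ the classical statement that an even self-map of $\mathbb{S}^n$ has even degree (the counterpart of Borsuk's result that an odd self-map has odd degree). If one wishes to avoid $\mathbb{RP}^n$ entirely, the case $n$ even can alternatively be disposed of at once from $\deg(g\circ A)=(-1)^{n+1}\deg g$ applied to the antipodal map $A$ together with $g\circ A=g$, which already forces $\deg g=0$; I would nonetheless favour the $\mathbb{RP}^n$ argument because it handles both parities in one stroke.
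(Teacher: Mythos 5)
Your proof is correct, and it follows the same broad strategy as the paper's --- contradiction, normalisation to $g:\mathbb{S}^n\to\mathbb{S}^n$, and a clash between an odd and an even value for $\deg g$ --- but the two halves are executed differently enough to be worth comparing. For the ``odd'' half, the paper splits on the parity of $n$: it quotes the hairy ball theorem for one parity and, for the other, notes that tangency makes $g$ fixed-point free, so $g$ is homotopic to the antipodal map and $\deg g=(-1)^{n+1}$ is odd. You instead observe that tangency also rules out $g(x)=-x$, so the normalised straight-line homotopy to the identity is well defined and $\deg g=1$ outright; this works uniformly for all $n\ge 1$, needs no case split, and does not invoke the hairy ball theorem at all (incidentally sidestepping a parity slip in the paper, which attributes the classical hairy ball theorem to odd $n$ when it is the even-$n$ statement --- harmless there, since the paper's second argument is in fact valid for both parities, but your uniform treatment removes the issue entirely). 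For the ``even'' half both proofs rest on the same key fact, that an even self-map of $\mathbb{S}^n$ has even degree; the paper cites Hatcher for this, while you reprove it via the factorisation $g=\bar g\circ q$ through $\mathbb{RP}^n$ and the observation that $q_*$ lands in $2H_n$. Your closing remark that for $n$ even one can instead combine $g\circ A=g$ with $\deg(g\circ A)=(-1)^{n+1}\deg g$ to force $\deg g=0$ is also correct. In short: same skeleton, but your version is self-contained and parity-free, at the cost of a little more homology.
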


\begin{proof}
The result is well known when $n$ is odd, in which case the condition $f(x) = f(-x)$ is not required; see \cite{Whittlesey} for a particularly elegant elementary proof.

We therefore consider the case where $n$ is even.  Assume that $f(x) \neq 0$ for all $x \in \mathbb{S}^{n}$, and define the continuous function $g: \mathbb{S}^{n} \to \mathbb{S}^{n}$ as $g(x) := f(x)/\norm{f(x)}$.  Then $g$ does not have a fixed point on $\mathbb{S}^{n}$, since otherwise $x^\top g(x) = x^\top x = 1$ at the fixed point, which is not possible since $g$ is a tangent vector field by construction. 

Since $g$ has no fixed points, its degree is odd \cite[p.~134]{Hatcher} (see \cite[\S2.2]{Hatcher} for a definition of the degree of a continuous map).  On the other hand, since $g(x) = g(-x)$, its degree must be even
\cite[p.~165]{Hatcher}, a contradiction.  Therefore $f$ must have at least one zero.
\end{proof}

The existence of a nonzero $x$ satisfying $N(x)x = 0$ is then guaranteed by setting $f(x) = N(x)x$ in Lemma~\ref{lem:HairyBall}.  Since $x\mapsto N(x)x$ is homogeneous, any such zero also satisfies $N(\alpha x)(\alpha x) = 0$ for all $\alpha \in \mathbb{R}$.  

We henceforward make the following assumption:

\begin{enumerate}[({A}1)]
\addtocounter{enumi}{1}
\item A matrix $Q \in \mathbb{R}^{n \times n}$ is chosen such that $N(d)d =0$ whenever $d \in \ker(Q)$. \label{Assumption:A2}
\end{enumerate}

The advantage of this assumption is that if the search for the centre of an invariant set is conducted over $\ker(Q)$, it can be performed by solving a semidefinite program.  The existence of such a $Q$ is guaranteed, since one can always define $Q := I - zz\tpose$ where $z \neq 0$ is a zero of $N(z)z$ whose existence is guaranteed by Lemma~\ref{lem:HairyBall}.  

We note that identification of such a zero may be difficult in general.   Define symmetric matrices $Q^{(i)}$ such that \mbox{$Q^{(i)}_{jk} := \frac{1}{2}[N(\bs{e}_j)_{ik} + N(\bs{e}_k)_{ij}]$}, for each $i,j,k \in \{1,\ldots,n\}$.  Then 
\begin{equation}\label{eqn:Nquadform}
N(d)d = \left( d\tpose Q^{(1)}d, \ldots, d\tpose Q^{(n)}d \right)\tpose,
\end{equation}
and computing a root of $N(d)d$ amounts to finding a simultaneous root of $n$ quadratic equations in $n$ variables.  See \cite{Barvinok,Grigoriev} for numerical solution methods for such problems.   However, if $\bigcap_{i=1}^n \ker(Q^{(i)}) \supsetneq \{0\}$ it is possible to select nontrivial $Q$ such that
\[
\ker(Q) = \bigcap_{i=1}^n \ker{(Q^{(i)})}.
\]
We demonstrate the application of this method 
to the Lorenz attractor in Section \ref{examplesSec}.  Even if this is not the case, a natural choice for $Q$ may be apparent given the system's underlying structure -- see Section \ref{examplesNineSec}.

%


\begin{prop}\label{prop:smallestStateTrapping}
Suppose that the semidefinite program
\begin{subequations}\label{eqn:SDP}
\begin{align}
\mathrm{minimize}		\quad & s \nonumber \\
\mathrm{subject} \; \mathrm{to} \quad & \left[ \begin{array}{cc} s & (Az)\tpose \\ (\ast) & sI_n \end{array} \right] \succeq 0 \label{eqn:SDP_A}\\
										\quad & tA + (A_z-A) + I \preceq 0 \label{eqn:SDP_B}\\
										\quad & t  \geq 0,\,\,\,Qz = 0  \label{eqn:SDP_C}
\end{align}
\end{subequations}
with variables $s,t \in \mathbb{R}$ and $z \in \mathbb{R}^n$ has optimal solution $(s^\ast,t^\ast,z^\ast)$.  Then $t^\ast>0$ and $B_{s^\ast}(z^\ast / t^\ast)$ is an invariant set for $(x(t))_{t \geq 0}$, with
\begin{align} 	
		 s^\ast = \mathop{\inf_{\substack{\alpha >0\\\,\,\,d \in \mathbb{R}^n}}}  \set{{\alpha}^{-1}{\|Ad + N(d)d\|_2}}{ 
		  A_d + \alpha I \preceq 0, d \in \ker(Q) } < \infty \label{eqn:SDPequiv}.		  
\end{align}
In the particular case $s^* = 0$ and $z^* = 0$, then $B_\gamma(0)$ is an invariant set for $(x(t))_{t \geq 0}$ for any $\gamma\ge 0$.  
If $\ker A \cap \ker Q = \{0\}$, then a minimizer to this SDP is guaranteed to exist if it is feasible.  
\end{prop}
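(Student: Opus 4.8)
The plan is to read off every assertion from the invariant‑set criterion of Lemma~\ref{lem:StateTrappingSet} after the substitution $t = 1/\alpha$, $z = d/\alpha$. Two preliminary simplifications come first. Since the lower‑right block of the matrix in \eqref{eqn:SDP_A} is $sI_n$, a Schur‑complement computation shows that \eqref{eqn:SDP_A} is equivalent to the scalar inequality $s \ge \|Az\|_2$; in particular the objective can only be pushed down until $s = \|Az\|_2$, so at optimality $s^\ast = \|Az^\ast\|_2 \ge 0$. Next I claim $t=0$ is infeasible, which yields $t^\ast>0$: if $t=0$ then \eqref{eqn:SDP_B} reads $(A_z-A)+I\preceq 0$, and testing this against $z$ while using $(A_z-A)z = 2N(z)z$ together with the energy identity \eqref{energy} gives $z\tpose\big((A_z-A)+I\big)z = \|z\|_2^2 \le 0$, so $z=0$; but then $A_z=A$ and \eqref{eqn:SDP_B} becomes $I\preceq 0$, a contradiction.

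Now fix an optimal $(s^\ast,t^\ast,z^\ast)$, set $\alpha := 1/t^\ast>0$ and $d := z^\ast/t^\ast$. Dividing \eqref{eqn:SDP_B} by $t^\ast$ and using that $w\mapsto A_w-A$ is linear (so $\tfrac1{t^\ast}(A_{z^\ast}-A) = A_d-A$) gives $A_d + \alpha I \preceq 0$; and $Qd=0$ from \eqref{eqn:SDP_C}, so by (A2) we have $N(d)d=0$. Lemma~\ref{lem:StateTrappingSet} then asserts that $B_r(d)$ is invariant for every $r \ge \alpha^{-1}\|Ad+N(d)d\|_2 = \alpha^{-1}\|Ad\|_2 = \|Az^\ast\|_2 = s^\ast$, i.e.\ $B_{s^\ast}(z^\ast/t^\ast)$ is invariant. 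The same substitution gives \eqref{eqn:SDPequiv}: any SDP‑feasible $(s,t,z)$ has $t>0$ and produces a feasible pair $(\alpha,d)=(1/t,z/t)$ for the infimum in \eqref{eqn:SDPequiv} with $\alpha^{-1}\|Ad+N(d)d\|_2 = \|Az\|_2 \le s$, so that infimum is $\le s^\ast$; conversely any feasible $(\alpha,d)$ in \eqref{eqn:SDPequiv} reverses to an SDP‑feasible point with objective $s = \alpha^{-1}\|Ad+N(d)d\|_2$, so the infimum is $\ge s^\ast$ — and it is $<\infty$ because the SDP is assumed feasible. For the degenerate case $s^\ast=0$, $z^\ast=0$: then \eqref{eqn:SDP_B} becomes $t^\ast A+I\preceq 0$, i.e.\ $A+\tfrac1{t^\ast}I\preceq 0$, so Lemma~\ref{lem:StateTrappingSet} with $d=0$ and $\alpha = 1/t^\ast>0$ shows $B_\gamma(0)$ is invariant for every $\gamma\ge 0$.

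It remains to prove attainment of the infimum when $\ker A\cap\ker Q=\{0\}$. Take a minimizing sequence $(s_k,t_k,z_k)$, so that $s_k\downarrow s^\ast$. Since $z\mapsto(Az,Qz)$ has trivial kernel it is bounded below, hence on $\ker Q$ we have $\|Az\|_2\ge c\|z\|_2$ for some $c>0$, so $c\|z_k\|_2\le\|Az_k\|_2\le s_k$ and $\{z_k\}$ is bounded; pass to a subsequence with $z_k\to z^\ast$, giving $\|Az^\ast\|_2=s^\ast$. The remaining task is to exhibit $t^\ast\ge 0$ with $t^\ast A+(A_{z^\ast}-A)+I\preceq 0$, for then $(s^\ast,t^\ast,z^\ast)$ is feasible and optimal. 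If $\{t_k\}$ has a bounded subsequence this follows by closedness of the feasible set; the genuine difficulty — and the step I expect to be the main obstacle — is the case $t_k\to\infty$. Dividing \eqref{eqn:SDP_B} at $(t_k,z_k)$ by $t_k$ and letting $k\to\infty$ forces $A+A\tpose\preceq 0$; testing the same inequality against unit vectors of $V_0:=\ker(A+A\tpose)$ and keeping the surviving $+I$ term shows $A_{z^\ast}-A\preceq -I$ on $V_0$, so $(A_{z^\ast}-A)+I$ is negative semidefinite there. One then wants a Schur‑complement argument relative to the orthogonal splitting $\mathbb{R}^n = V_0\oplus\mathrm{range}(A+A\tpose)$, on whose second summand $A+A\tpose$ is strictly negative: for $t$ large the second block dominates and its Schur complement tends to $(A_{z^\ast}-A)+I$ restricted to $V_0$. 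Making this rigorous requires a range condition on the off‑diagonal block of $A_{z^\ast}-A$ in this splitting — equivalently, ruling out an eigenvector of $(A_{z^\ast}-A)+I$ in $V_0$ at eigenvalue $0$ whose image under $A_{z^\ast}-A$ escapes $V_0$ — and this is precisely where the feasibility inequalities along the sequence, the energy‑preserving structure, and the hypothesis $\ker A\cap\ker Q=\{0\}$ must be combined carefully.
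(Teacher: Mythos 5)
Everything up to and including the equivalence \eqref{eqn:SDPequiv} and the degenerate case $s^\ast=z^\ast=0$ is correct and follows essentially the paper's route: the Schur complement reading of \eqref{eqn:SDP_A} as $s\ge\norm{Az}_2$, infeasibility of $t=0$ via the energy identity, and the substitution $(\alpha,d)=(1/t,z/t)$ feeding into Lemma~\ref{lem:StateTrappingSet}. Your $t=0$ argument and your treatment of the degenerate case (direct application of Lemma~\ref{lem:StateTrappingSet} with $d=0$ rather than the paper's Lyapunov detour through $A+A^\top\prec 0$) are, if anything, cleaner than the paper's.

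The genuine gap is exactly where you flag it: attainment of the minimum. Your minimizing-sequence argument controls $\{z_k\}$ by the same device as the paper (the lower bound $\norm{Az}_2\ge\epsilon\norm{z}_2$ on $\ker Q$ coming from $\ker A\cap\ker Q=\{0\}$), but the case $t_k\to\infty$ is left open, and the Schur-complement sketch on $V_0=\ker(A+A^\top)$ cannot be closed as stated: a null vector of $\bigl((A_{z^\ast}-A)+I\bigr)$ restricted to $V_0$ with nonvanishing off-diagonal coupling into $\mathrm{range}(A+A^\top)$ would defeat every finite $t$, and nothing in your argument excludes this. The idea you are missing is that $t$ should not be tracked along the minimizing sequence at all. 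Since the objective $s$ does not involve $t$, it suffices to show that whenever \eqref{eqn:SDP_B} is satisfiable for some admissible $z$ with $\norm{z}_2\le\bar z$, it is satisfiable with $t$ in a fixed bounded interval: the paper constructs an explicit uniform bound $\bar t=\zeta/\xi$ from the relevant negative eigenvalue $-\xi$ of $\tfrac12(A+A^\top)$ and $\zeta:=\sup_{\norm{z}\le\bar z}\sigma_{\max}\bigl[I+\tfrac12\bigl((A_z-A)+(A_z-A)^\top\bigr)\bigr]$, so that adding the redundant constraints $s\le\bar s$, $\norm{z}_2\le\bar z$, $t\le\bar t$ leaves the optimal value unchanged while making the feasible set compact; existence of a minimizer then follows from Weierstrass rather than from a limiting argument in $t$.
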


\begin{proof}

We first show that any feasible point of \eqref{eqn:SDP} has $t > 0$.  Assume instead that there is some feasible point with $t = 0$, so that \eqref{eqn:SDP_B} satisfies $(A_z-A) \prec 0$ for some $z$.  Then $z\tpose(A_z-A)z = 2z\tpose N(z)z < 0$, which violates the energy conservation condition \eqref{energy}.

We now consider the case when $s^\ast=0$. Let $(t^\ast,z^\ast)$ be an optimal solution in this case. Then by \eqref{eqn:SDP_A}, \eqref{eqn:SDP_C} we have $z^\ast \in \ker{A}$ and $N(z^\ast)z^\ast  = Q z^\ast  = 0$. Consequently $z^\ast \in \ker (t^\ast A + (A_{z^\ast} - A))$ and \eqref{eqn:SDP_B} implies that $z^\ast =0$. In particular $A_{z^\ast}-A = 0$ and \eqref{eqn:SDP_B} guarantees that $A  + A^\top\prec 0$.  In this case $V(x) = \norm{x}_2^2$ is a Lyapunov function for \eqref{nonlinsys} and a ball of any radius centered at the origin is invariant.  

If $s^* > 0$, then \eqref{eqn:SDP} is equivalent to \eqref{eqn:SDPequiv} after applying a change of variables $\alpha = 1/t$, $d = z /t$ and rewriting \eqref{eqn:SDP_A} as a quadratic constraint via Schur complement.  The inequality \eqref{eqn:SDP_B} is equivalent to $A_d + \alpha I \preceq 0$ (note the identity $t A_{z/t} = tA + (A_z-A)$).  Invariance then follows from Lemma \ref{lem:StateTrappingSet}.

The existence of a minimizer for \eqref{eqn:SDP} can be established by showing that the problem is equivalent to one with compact constraints.  If \eqref{eqn:SDP} is feasible, then any value $s = \bar s$ at any feasible point can be used as an artificial upper bound on $s$.  Since $\ker{A} \cap \ker{Q} =\{0\}$, there exists $\epsilon>0$ such that $\|Az\| \geq \epsilon \|z\|$, for any $z \in \ker{Q}$\footnote{ Suppose for a contradiction that there exists a sequence $z_n \in \ker{Q} \setminus \{0\}$ such that $\|Az_n\| \leq \frac{1}{n} \|z_n\|, n \geq 1$. Let $\xi_n:=z_n / \|z_n\|$. Then $\xi_n \in \mathbb{S}^{n-1}$ and hence there exists a convergent subsequence $\xi_{n_r} \rightarrow \xi \in \mathbb{S}^{n-1} \cap \ker{Q}$. However, 
\[
\|A\xi \| = \lim_{r \rightarrow \infty} \|A \xi_{n_r}\| \leq \lim_{r \rightarrow \infty} \frac{1}{n_r} =  0 
\]
which implies that $\xi \in \ker{A}$, contradicting $\ker{A} \cap \ker{Q} = \{0\}$. \qedhere. 
%
%
%

}. 
Since \eqref{eqn:SDP_A} is equivalent to $\norm{Az} \le s$, we can also add a constraint $\norm{z} \le \bar s/\epsilon =: \bar z$.

Denote as $\xi$ the magnitude of the smallest negative eigenvalue of $\frac{1}{2}(A + A\tpose)$.  Define $\zeta \ge 0$ as 
\[
\zeta := \sup_{\norm{z} \le \bar z}\sigma_{\max}\biggl[I + \frac{1}{2}[(A_z - A) + (A_z - A)\tpose]\biggr].
\]
Then one can also impose an upper bound $t\le \bar t = \zeta/\xi$ without altering the minimum value of \eqref{eqn:SDP}.    Augmenting the constraints in \eqref{eqn:SDP} with $s\le \bar s$, $t\le \bar t$ and $\norm{z} \le \bar z$, so that the feasible set is compact without altering theoptimal value, ensures the existence of a minimizer.

\end{proof}


%

\begin{rem}
Note that the kernel constraint \eqref{eqn:SDP_C} is included in order to ensure that the problem \eqref{eqn:SDP} is solvable as a semidefinite program, and is conservative in the sense that it restricts the search for an invariant set $B_r(d)$ to those with centers satisfying $N(d)d = 0$.  

It is also possible to remove this condition and solve the more general problem
\begin{align} 	
		 s^\ast = \mathop{\inf_{\substack{\alpha >0\\\,\,\,d \in \mathbb{R}^n}}}  \set{{\alpha}^{-1}{\|Ad + N(d)d\|_2}}{ 
		  A_d + \alpha I \preceq 0}		  
\end{align}
directly, i.e.\ to solve the problem \eqref{eqn:SDPequiv} without a kernel constraint.  Assuming that $s^* > 0$, one can make a change of variables $\alpha = 1/t$, $d =  z/t$ and apply the Schur complement to get the equivalent problem
\begin{subequations}\label{eqn:SDPbilin}
\begin{align}
\mathrm{minimize}		\quad & s \nonumber \\
\mathrm{subject} \; \mathrm{to} \quad & \left[ \begin{array}{cc} s & (Az + N(d)z)\tpose \\ (\ast) & sI_n \end{array} \right] \succeq 0 \\
										\quad & tA + (A_z-A) + I \preceq 0 \\
										\quad & t  \geq 0, \,\, z = td.
\end{align}
\end{subequations}
Noting that $z = td$ is equivalent to the pair of constraints
$\mathrm{diag}(z) \preceq \mathrm{diag}(td) \preceq \mathrm{diag}(z)$, the constraints in optimization problem \eqref{eqn:SDPbilin} constitute a set of bilinear matrix inequalities (BMIs).  Although methods  for solving optimization problems of this type are available \cite{PENNON}, there is generally no guarantee that a solution will be globally optimal.  We therefore follow the somewhat more conservative method of 
Proposition~\ref{prop:smallestStateTrapping}.
\end{rem}

\subsection{Locally stable observers}

If it is possible to calculate an invariant set for the state, Theorem \ref{thm:LocalObserver} of this section provides a strategy for constructing a locally convergent observer. First, it will be useful to derive an explicit expression for the norm of the non-linear term $N$. 

\begin{lem} \label{lem:Nnorm}
Suppose that $N$ is given by \eqref{eqn:Nquadform}. Define matrices $\tilde Q^{(k)} = (\tilde q_{ij}^{(k)})_{i,j=1}^n$ by $\tilde q_{ij}^{(k)}:= q_{jk}^{(i)}$. Then 
\[
\|N\|_2 = \sigma_{\mathrm{max}}( \Theta )^\frac{1}{2}, 
\]
where $\Theta$ is a matrix whose $(i,j)^{th}$ entry is $ \Theta_{i,j} := \innerprod{\tilde Q^{(i)}}{\tilde Q^{(j)}}$, and $\innerprod{A}{B} =\mathop{tr}(A\tpose B)$ is the standard (Frobenius) inner product. 
\end{lem}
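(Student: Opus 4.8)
The plan is to reduce the identity to a Rayleigh-quotient computation for the Gram matrix $\Theta$. Here I read $\norm{N}_2$ as the operator norm of the linear map $N:\mathbb{R}^n\to\mathbb{R}^{n\times n}$ when $\mathbb{R}^{n\times n}$ is equipped with the Frobenius norm $\innerprod{M}{M}^{1/2}$, i.e.\ $\norm{N}_2=\sup_{\norm{d}_2=1}\innerprod{N(d)}{N(d)}^{1/2}$; and I read the hypothesis ``$N$ is given by \eqref{eqn:Nquadform}'' as the statement that the three-index array $N(\bs{e}_j)_{ik}$ is symmetric under exchange of $j$ and $k$, which is exactly the component of $N$ retained in the map $d\mapsto N(d)d$, and hence in the matrices $Q^{(i)}$.

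The essential step is to write $N(d)$ explicitly in terms of the $\tilde Q^{(k)}$. By linearity, $N(d)=\sum_{j=1}^n d_j N(\bs{e}_j)$, so $N(d)_{ik}=\sum_{j=1}^n d_j N(\bs{e}_j)_{ik}$. Under the assumed symmetry the definition of $Q^{(i)}$ reduces to $N(\bs{e}_j)_{ik}=Q^{(i)}_{jk}$, and using the symmetry of each $Q^{(i)}$ together with the definition $\tilde q_{ij}^{(k)}:=q_{jk}^{(i)}$ one gets $N(\bs{e}_j)_{ik}=Q^{(i)}_{kj}=\tilde q_{ik}^{(j)}$. Summing over $j$ yields the identity
\[
N(d)=\sum_{k=1}^n d_k\,\tilde Q^{(k)},
\]
so that $\tilde Q^{(k)}$ is precisely the coefficient of $d_k$ in $N(d)$; equivalently, the $k$th column of $N(d)$ equals $\tilde Q^{(k)}d$. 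This identity is the crux of the proof.

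Granting it, bilinearity of the Frobenius inner product and the definition $\Theta_{kl}=\innerprod{\tilde Q^{(k)}}{\tilde Q^{(l)}}$ give, for every $d\in\mathbb{R}^n$,
\[
\innerprod{N(d)}{N(d)}=\sum_{k,l=1}^n d_k d_l\,\innerprod{\tilde Q^{(k)}}{\tilde Q^{(l)}}=d\tpose\Theta d .
\]
Since $\Theta$ is the Gram matrix of $\tilde Q^{(1)},\dots,\tilde Q^{(n)}$ with respect to $\innerprod{\cdot}{\cdot}$, it lies in $\mathcal{S}^n$ and satisfies $\Theta\succeq 0$. Maximizing over the unit sphere, $\norm{N}_2^2=\sup_{\norm{d}_2=1}d\tpose\Theta d=\lambda_{\mathrm{max}}(\Theta)$, and as $\Theta$ is symmetric positive semidefinite this largest eigenvalue equals $\sigma_{\mathrm{max}}(\Theta)$. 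Taking square roots gives $\norm{N}_2=\sigma_{\mathrm{max}}(\Theta)^{\frac{1}{2}}$, as claimed.

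The main obstacle is the displayed identity $N(d)=\sum_k d_k\tilde Q^{(k)}$: it is a matter of carefully tracking the three index slots, and it relies on interpreting ``$N$ is given by \eqref{eqn:Nquadform}'' correctly -- the component of $N$ antisymmetric in its last two indices is invisible to $N(d)d$ but contributes to $\norm{N(d)}$, so equality in the lemma implicitly forces that component to vanish. Once the identity is established, the Gram-matrix computation and the Rayleigh-quotient maximization are entirely routine.
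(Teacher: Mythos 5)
Your proof is correct and follows essentially the same route as the paper's: both hinge on the identity $N(d)=\sum_k d_k\tilde Q^{(k)}$ and then identify $\Theta$ as the Gram matrix $N^\ast N$, the only cosmetic difference being that you spell this out as the Rayleigh quotient $\innerprod{N(d)}{N(d)}=d\tpose\Theta d$ while the paper invokes the adjoint directly. Your explicit remark that the hypothesis forces the part of $N$ antisymmetric in its last two index slots to vanish is a careful reading that the paper's one-line proof leaves implicit.
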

\begin{proof}
It can be shown from (\ref{eqn:Nquadform}) that $N(x) = \sum_{i=1}^n x_i \tilde Q^{(i)}$, for $x = (x_i)_{i=1}^n \in \mathbb{R}^n$. The adjoint $N^\ast : \mathbb{R}^{n \times n} \rightarrow \mathbb{R}^n$ is given by 
\[
N^\ast(Q) = \left( \langle{Q,\tilde Q^{(i)}}\rangle \right)_{i=1}^n, \qquad Q \in \mathbb{R}^{n \times n},
\]
and hence, $N^\ast N = \Theta$.
\end{proof}


\begin{thm}[Local Observer Convergence] \label{thm:LocalObserver}
Suppose that $B_r(d)$ is invariant for $(x(t))_{t \geq 0}$ and $Y \subset \mathbb{R}^n$ satisfies $B_r(d) \subseteq Y$. Suppose that there exist $(\alpha_i)_{i=1}^3>0, P \in \mathcal{S}^n$ and $R \in \mathbb{R}^{n \times p}$ such that
\begin{equation}
\alpha_1 I  \preceq  P  \preceq  \alpha_2 I \label{Pbnd}
\end{equation}
and
\begin{equation}\label{dynSucc00}
PA_{y} + A^\top_{y}P + RC + C^\top R^\top  \preceq  -\alpha_3 I, \qquad y \in Y. 
\end{equation}
If $(\hat x(t))_{t \geq0}$ has dynamics \eqref{LuOb} for $L:=P^{-1}R$ and 
\begin{equation} \label{goodGuess}
\|e_0\|_2 = \|x_0 - \hat x_0\|_2 < \frac{\alpha_3}{2 \gamma \alpha_2}\sqrt{ \frac{\alpha_1}{\alpha_2}},
\end{equation}
then $e(t)\rightarrow 0, t \rightarrow \infty$. The constant $\gamma:=\sigma_{\mathrm{max}}(\Theta)^\frac{1}{2}$ is defined in Lemma \ref{lem:Nnorm}. 
\end{thm}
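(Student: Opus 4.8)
The plan is to use $V(e) := e\tpose P e$ as a Lyapunov-type function for the error dynamics \eqref{Err}, and to show that $V$ decreases along trajectories as long as $\hat x$ remains sufficiently close to the invariant ball $B_r(d)$, with the smallness condition \eqref{goodGuess} guaranteeing that this closeness is maintained for all time. First I would rewrite the nonlinear mismatch term in \eqref{Err} using bilinearity of $N$: setting $z := x - \hat x = e$, one has $N(x)x - N(\hat x)\hat x = N(e)\hat x + N(\hat x)e + N(e)e$, and since $x$ lies in the invariant set $B_r(d) \subseteq Y$ I would substitute $\hat x = x - e$ and group terms to write $\dot e = (A_x + LC)e + N(e)e$ — using that $A_x e = Ae + N(x)e + N(e)x$ matches $Ae + N(x)x - N(x-e)(x-e)$ up to the quadratic remainder. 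The key point is that the linear part of the error dynamics is governed by $A_x + LC$ with $x \in Y$, which is exactly the matrix appearing in \eqref{dynSucc00}.

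Next I would differentiate $V(e) = e\tpose P e$ along this trajectory. Writing $L = P^{-1}R$ so that $PA_x + A_x\tpose P + PLC + C\tpose L\tpose P = PA_x + A_x\tpose P + RC + C\tpose R\tpose \preceq -\alpha_3 I$ by \eqref{dynSucc00} (valid since $x(t) \in Y$ by invariance), the linear part contributes at most $-\alpha_3\|e\|_2^2$. The quadratic remainder contributes $2 e\tpose P N(e)e$, which by Lemma \ref{lem:Nnorm} and \eqref{Pbnd} is bounded in magnitude by $2\alpha_2 \gamma \|e\|_2^3$. Hence
\begin{equation*}
\dot V(e) \le -\alpha_3 \|e\|_2^2 + 2\alpha_2\gamma\|e\|_2^3 = -\|e\|_2^2\bigl(\alpha_3 - 2\alpha_2\gamma\|e\|_2\bigr).
\end{equation*}
Using $\alpha_1\|e\|_2^2 \le V(e) \le \alpha_2\|e\|_2^2$, this shows $\dot V < 0$ whenever $0 < \|e\|_2 < \alpha_3/(2\alpha_2\gamma)$; more carefully, $\dot V \le -\frac{\alpha_3}{2}\|e\|_2^2 \le -\frac{\alpha_3}{2\alpha_2}V$ on the smaller region $\|e\|_2 \le \alpha_3/(4\alpha_2\gamma)$.

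The main obstacle — and the reason for the specific constant in \eqref{goodGuess} — is the invariance argument: I must ensure the error never leaves the region where $\dot V < 0$. The sublevel sets of $V$ are the relevant invariant objects, so I would fix the threshold radius $\rho := \alpha_3/(2\alpha_2\gamma)$ at which $\dot V$ changes sign and note that the set $\{e : V(e) \le \alpha_1\rho^2\}$ is contained in $\{\|e\|_2 \le \rho\sqrt{\alpha_1/\alpha_2}\}$... wait — I should instead take the largest sublevel set inside the ball of radius $\rho$, namely $\Omega := \{e : V(e) \le \alpha_1\rho^2\}$, which satisfies $\Omega \subseteq \{\|e\|_2 \le \rho\}$ and on which $\dot V \le 0$, hence $\Omega$ is forward invariant. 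The hypothesis \eqref{goodGuess} states precisely that $\|e_0\|_2 < \rho\sqrt{\alpha_1/\alpha_2}$, which gives $V(e_0) \le \alpha_2\|e_0\|_2^2 < \alpha_1\rho^2$, so $e_0 \in \Omega$ and therefore $e(t) \in \Omega$ for all $t \ge 0$. Finally, on $\Omega$ the strict bound $\dot V \le -\frac{\alpha_3}{2}\|e\|_2^2$ (for $\|e\|_2 \le \rho/2$, or with an adjusted constant on all of $\Omega$) combined with $V \le \alpha_2\|e\|_2^2$ yields $\dot V \le -(\alpha_3/2\alpha_2)V$ for $e$ in a neighborhood of $0$; a standard argument (e.g.\ LaSalle, or Grönwall once the trajectory enters the linearly-dominated region) then gives exponential decay $V(e(t)) \to 0$, hence $e(t) \to 0$. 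The one subtlety to check is that invariance of $B_r(d)$ for $x$ is used twice — once to guarantee $x(t) \in Y$ so that \eqref{dynSucc00} applies along the trajectory, and it does not require any assumption on $\hat x_0$ beyond \eqref{goodGuess}.
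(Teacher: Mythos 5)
Your proposal is correct and follows essentially the same route as the paper's proof: the same decomposition of $N(x)x-N(\hat x)\hat x$ yielding error dynamics driven by $A_{x(t)}+LC$ plus a quadratic remainder, the same Lyapunov function $V(e)=e\tpose Pe$ with the bound $\dot V\le(-\alpha_3+2\gamma\alpha_2\norm{e}_2)\norm{e}_2^2$, and the same sublevel-set invariance argument in which \eqref{goodGuess} places $V(e_0)$ strictly inside the critical level $\alpha_1\rho^2$ (the paper makes your ``adjusted constant'' explicit by choosing $\epsilon$ so that the margin gives $\dot V\le-\epsilon\norm{e}_2^2$ and hence exponential decay). The only blemish is a sign slip in the remainder term ($+N(e)e$ rather than $-N(e)e$), which is harmless since you only use its magnitude.
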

\begin{proof}
Using 
\begin{align}
N(x)x-N(\hat x)\hat x &= N(x-\hat x)x - N(\hat x)\hat x +N(\hat x)x \nonumber\\
											&= N(x-\hat x)x + N(\hat x)(x-\hat x)  \nonumber\\
											&=N(e)x + N(x)e - N(e)e, \label{eqn:nonlinError_v1}
\end{align}														
the error dynamics (\ref{Err}) can be rewritten
\begin{equation*}
\dot e = (A_{x(t)} + LC)e -N(e)e, \qquad t \geq 0. 
\end{equation*}
Since the underlying state dynamics $(x(t))_{t \geq 0}$ are unaffected by $(e(t))_{t \geq 0}$ it is possible to consider the time varying linear operator
$\mathcal{A}(t):= A_{x(t)} + LC$ independently of the error dynamics. Hence, if $V(e):=e^T Pe$, 
\begin{align}
\dot V(e) & = e^\top \left( P\mathcal{A}(t) + \mathcal{A}(t)^\top P\right) e - 2e^\top PN(e)e \nonumber \\
(\text{by (\ref{Pbnd}), (\ref{dynSucc00})}) & \leq  - \alpha_3 \|e\|_2 + 2\alpha_2 \|e\|_2^2 \|N(e)\|_2 \nonumber \\
(\text{by Lemma \ref{lem:Nnorm}}) & \leq (-\alpha_3 + 2 \gamma \alpha_2 \|e\|_2 )\|e\|_2^2. \label{deriv:eqn}
\end{align}
Now let $\epsilon \in (0,\alpha_3)$ be such that $\|e_0\|_2 = \frac{\alpha_3-\epsilon}{2 \gamma \alpha_2}\sqrt{ \frac{\alpha_1}{\alpha_2}}$ and define $\rho:=(\alpha_3-\epsilon)/2\gamma \alpha_2$. Then \eqref{Pbnd} and \eqref{deriv:eqn} imply that,
\[
V(e) \leq \alpha_1 \rho^2 \Longrightarrow \|e\|_2 \leq \rho \Longrightarrow \dot V(e) \leq -\epsilon \|e\|_2^2.
\]
By \eqref{Pbnd} and \eqref{goodGuess}, $V(e_0) \leq \alpha_2 \|e_0\|_2^2 =\alpha_1 \rho^2$. Hence, $(V(e(t)))_{t \geq 0}$ is decreasing and 
\[
\|e(t)\|_2 \leq \sqrt{ \frac{V(e(t))}{\alpha_1}} \leq \sqrt{ \frac{ V(e_0)}{\alpha_1}}   e^{-\frac{\epsilon t}{2 \alpha_2}} \leq \rho e^{-\frac{\epsilon t}{2 \alpha_2}} \rightarrow 0, \qquad t \rightarrow \infty.
\]
\end{proof}
\begin{rem} \label{neccsuff:rem}
A simple, but instructive, necessary condition for \eqref{Pbnd}, \eqref{dynSucc00} to hold is that the pair $(A_d,C)$ is detectable. In other words, the output map must at least be compatible with the linear system generated by the perturbed matrix $A_d$. Furthermore, if $B_r(d)$ is invariant for $(x(t))_{t \geq 0}$ with $A_d + \alpha I \preceq 0$ and $r= \alpha^{-1} \|Ad + N(d)d\|_2$, then 
\[
\|d\|_2^2 \leq -\frac{1}{\alpha} d^\top A_d d = -\frac{1}{\alpha} d^\top (Ad + N(d)d) \leq \frac{1}{\alpha} \|d\|_2 \|Ad + N(d)d\|_2 = r \|d\|_2.
\]
Hence, $0 \in B_r(d)$. Therefore, in the case that the state invariant set is calculated by Proposition \ref{prop:smallestStateTrapping}, detectability of $(A,C)$  is also necessary for \eqref{Pbnd}, \eqref{dynSucc00} to hold.

A sufficient condition for local convergence can be formulated involving only the matrices $A_d$ and $C$. If there exists $\alpha>0$ and  $0 \prec P \prec \alpha( 4 \gamma r)^{-1}$ such that 
\[
PA_d + A_d^\top P + RC + C^\top R^\top \preceq -\alpha I,
\]
then it is not difficult to show that \eqref{dynSucc00} holds for $Y = B_r(d)$, implying that the observer is locally convergent. 
\end{rem}

We discuss a method for reformulating the semi-infinite LMI constraint \eqref{dynSucc00} as a finite-dimensional LMI in Section \eqref{sec:RobustLMI}.

\subsection{An observer invariant set}

If a locally convergent observer can be constructed by Theorem \ref{thm:LocalObserver}, it is natural to ask whether it is possible to extend the set of initial states for which the observer converges. Since it is known that the system state $(x(t))_{t \geq 0}$ has an invariant set $S$, say, it is desirable for the observer $(\hat x(t))_{t \geq 0}$ to itself possess an invariant set which contains $S$.




The following two results provide a method for calculating an invariant set for the observer dynamics.  The first of these characterizes the trapping set for the observer error dynamics, and parallels the results of Lemma~\ref{lem:StateTrappingSet}.

\begin{lem} \label{lem:ObserverTrappingSet}
Suppose that $B_r(d)$ is invariant for \eqref{nonlinsys} and that $x_0 \in B_r(d)$. Suppose that there exist $\hat d \in \mathbb{R}^n$ and $\alpha>0$ such that  $A_{\hat d} + LC +\alpha I \preceq 0$, then $B_{\hat r}(\hat d)$ is invariant for \eqref{LuOb} for any 
\begin{equation} \label{obsRadius}
\hat r \ge \frac{1}{\alpha} \sup_{v \in B_r(d-\hat d) } \left\| LCv - A\hat d - N(\hat d)\hat d  \right\|_2.
\end{equation}
\end{lem}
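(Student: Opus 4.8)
The plan is to mimic the proof of Lemma~\ref{lem:StateTrappingSet}, working with the observer error relative to the candidate centre $\hat d$ rather than the state relative to $d$. First I would introduce the shifted variable $\hat e := \hat x - \hat d$ and use the linearity of $N$ to expand the observer dynamics \eqref{LuOb}. Writing $\hat x = \hat e + \hat d$ and using the same algebraic identity as in Lemma~\ref{lem:StateTrappingSet} (namely $N(\hat x)\hat x = A_{\hat d}\hat e - A\hat e + N(\hat e)\hat e + N(\hat d)\hat d$, together with $A\hat x = A\hat e + A\hat d$), one obtains
\begin{equation*}
\dot{\hat e} = (A_{\hat d} + LC)\hat e + N(\hat e)\hat e + \bigl(A\hat d + N(\hat d)\hat d - A\hat d + LCd - LC\hat d\bigr) - LC x + LC d,
\end{equation*}
which after grouping is $\dot{\hat e} = (A_{\hat d}+LC)\hat e + N(\hat e)\hat e + \bigl(N(\hat d)\hat d + A\hat d\bigr) - LC(x - \hat d)$, i.e.\ the ``disturbance'' term is $LC(x-\hat d) - A\hat d - N(\hat d)\hat d$. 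The key point is that $x - \hat d = (x - d) + (d - \hat d)$ and $x(t) \in B_r(d)$ for all $t\ge 0$ by the invariance hypothesis and $x_0 \in B_r(d)$, so $x(t) - \hat d \in B_r(d - \hat d)$ for all $t$.

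Next I would take the candidate Lyapunov-type function $\hat D(\hat x) := \tfrac12\|\hat x - \hat d\|_2^2$ and differentiate along the observer trajectory. The quadratic term $\hat e\tpose N(\hat e)\hat e$ vanishes by the energy-preserving property \eqref{energy}, exactly as in Lemma~\ref{lem:StateTrappingSet}, leaving
\begin{equation*}
\dot{\hat D}(\hat x) = \hat e\tpose(A_{\hat d}+LC)\hat e + \hat e\tpose\bigl(LC(x-\hat d) - A\hat d - N(\hat d)\hat d\bigr) \le -\alpha\|\hat e\|_2^2 + \|\hat e\|_2\,\bigl\|LC(x-\hat d) - A\hat d - N(\hat d)\hat d\bigr\|_2,
\end{equation*}
where the bound on the first term uses $A_{\hat d} + LC + \alpha I \preceq 0$. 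Since $x(t) - \hat d$ ranges in $B_r(d-\hat d)$, the second term is bounded above by $\|\hat e\|_2 \sup_{v\in B_r(d-\hat d)}\|LCv - A\hat d - N(\hat d)\hat d\|_2 = \alpha \hat r \|\hat e\|_2$ for $\hat r$ as in \eqref{obsRadius}. Hence $\dot{\hat D}(\hat x) \le (-\alpha\|\hat e\|_2 + \alpha\hat r)\|\hat e\|_2 < 0$ whenever $\|\hat x - \hat d\|_2 = \|\hat e\|_2 > \hat r$, which shows no trajectory can escape $B_{\hat r}(\hat d)$, i.e.\ $B_{\hat r}(\hat d)$ is invariant for \eqref{LuOb}.

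I expect the only real subtlety — and the main place to be careful — is the algebraic bookkeeping in re-expressing the observer dynamics so that the nonlinear term appears purely as $N(\hat e)\hat e$ (so that \eqref{energy} applies) and all the remaining terms are collected into the affine ``disturbance'' $LC(x-\hat d) - A\hat d - N(\hat d)\hat d$; one has to be attentive that the cross terms $N(\hat e)\hat d + N(\hat d)\hat e$ recombine into $A_{\hat d}\hat e - A\hat e$ via the definition of $A_{\hat d}$. The other ingredient that needs to be stated explicitly is that $x_0\in B_r(d)$ plus invariance of $B_r(d)$ forces $x(t)\in B_r(d)$ for all $t\ge 0$, which is what makes the supremum in \eqref{obsRadius} a uniform-in-time bound; this is exactly the role of the hypothesis that $B_r(d)$ is invariant and $x_0$ lies in it. Everything else is a direct parallel of Lemma~\ref{lem:StateTrappingSet}.
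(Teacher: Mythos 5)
Your proof is correct and follows essentially the same route as the paper's: shift by $\hat d$, use linearity of $N$ and the definition of $A_{\hat d}$ to isolate the nonlinearity as $N(\hat e)\hat e$ plus the affine disturbance $LC(x-\hat d)-A\hat d-N(\hat d)\hat d$, kill the cubic term via \eqref{energy}, and bound the disturbance uniformly in time using the invariance of $B_r(d)$. The only blemish is that your first displayed expression for $\dot{\hat e}$ contains bookkeeping typos (the $A\hat d$ and $LC$ terms there do not simplify to what you state after ``grouping''), but the grouped form you actually carry forward is the correct one, so the argument stands.
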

\begin{proof}
Suppose that $\hat d \in \mathbb{R}^n$ is such that $A_{\hat d} + LC + \alpha I \preceq 0$. Then, 
\begin{align*}
\dot{ \hat x} = (A_{\hat d}+LC)(\hat x - 	& \hat d) + N(\hat x - \hat d)(\hat x - \hat d)\\
																					& - (LC(x-\hat d)- A\hat d-N(\hat d)\hat d). 
\end{align*}
For $D(\hat x):= \frac{1}{2} \|\hat x - \hat d\|_2^2$, the energy preserving property \eqref{energy} implies that 
\begin{align*}
\dot D(\hat x)	&= (\hat x - \hat d)\tpose (A_{\hat d} + LC )(\hat x-\hat d) \\
& \qquad \qquad - (\hat x -\hat d)\tpose ( LC(x-\hat d)- A\hat d - N(\hat d)\hat d )\\
								&\le -\alpha \|\hat x - \hat d\|_2^2 \\
								& \qquad \qquad - (\hat x - \hat d)\tpose ( LC(x- \hat d) - A\hat d - N(\hat d)\hat d ).
\end{align*}
By assumption,  $x(t) \in B_r(d)$, for any time $t \geq 0$, so that
\begin{equation*}
x(t)-\hat d \in B_r(d-\hat d), \qquad t \geq 0.
\end{equation*}
Therefore $\dot D(\hat x) < 0$ whenever $\norm{\hat x - \hat d} > \hat r$, with $\hat r$ given by \eqref{obsRadius}.    Hence, $B_{\hat r}(\hat d)$ is invariant for~\eqref{LuOb}.
\end{proof}


The next result provides a method for computing an invariant set for the observer dynamics given an observer gain $L$.  As in case for the state invariant set,  the non-linear dependence of \eqref{obsRadius} upon $(\alpha,\hat d) \in \mathbb{R}\times \mathbb{R}^n$ makes global minimization difficult.  We therefore remove the non-linear dependence upon $\hat d$ from \eqref{obsRadius} by restricting the search to vectors $\hat d$ such that $N(\hat d)\hat d=0$.  In the following, the assumption is made that the observer gain $L$ is such that $A+LC$ is stable. Note that, by Remark \ref{neccsuff:rem}, this is a necessary condition even for local convergence. The following result parallels Proposition \ref{prop:smallestStateTrapping}.


\begin{prop}\label{prop:smallestObsTrapping}
Suppose that $B_r(d)$ is an invariant set for \eqref{nonlinsys} with $r > 0$, $A+LC$ stable, and that the semidefinite program
\begin{subequations}\label{eqn:SDPobs}
\begin{align}
\mathrm{minimize}		\quad & s \nonumber \\
\mathrm{subject} \; \mathrm{to} \quad & 
\left[ \begin{array}{cc} s-rt\|LC\|_2 & (LC(td-z) - Az)\tpose \\ 	
(\ast) & (s-rt\|LC\|_2)I_n \end{array} \right]  \succeq 0 	\label{eqn:SDPobs_A}\\
\quad & t(A+LC) + (A_{z}-A) + 	I  \preceq 0 \label{eqn:SDPobs_B}\\
\quad & t  \geq 0,\,\,\,Qz = 0  \label{eqn:SDPobs_C}
\end{align}
\end{subequations}

with variables $s,t \in \mathbb{R}$ and $z \in \mathbb{R}^n$ has optimal solution $(s^\ast,t^\ast,z^\ast)$.  Then $(s^\ast,t^\ast)>0$ and $B_{s^\ast}(z^\ast / t^\ast)$ is an invariant set for $(\hat x(t))_{t \geq 0}$, with
\begin{align} 	
		 s^\ast \ge \mathop{\inf_{\substack{\alpha >0\\\,\,\,\hat d \in \mathbb{R}^n}}}  \set{{\alpha}^{-1}
		 \!\!\!\!\!\!\!
		 \sup_{v \in B_r(d-\hat d) } \norm{LCv - A\hat d - N(\hat d)\hat d }_2
		 }{ 
		  A_{\hat d} + LC +\alpha I \preceq 0, \hat d \in \ker(Q) }.		  
\end{align}
A minimizer to this SDP is guaranteed to exist if it is feasible.  
\end{prop}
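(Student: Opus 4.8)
The plan is to mirror the proof of Proposition~\ref{prop:smallestStateTrapping} almost line for line, with the observer error bound \eqref{obsRadius} of Lemma~\ref{lem:ObserverTrappingSet} playing the role that \eqref{stateRadius} played there. First I would show that any feasible point has $t>0$: if $t=0$ then \eqref{eqn:SDPobs_B} forces $A_z - A \prec 0$, whence $2z\tpose N(z)z = z\tpose(A_z-A)z < 0$, contradicting the energy-preserving property \eqref{energy}; exactly as before. Next, to see $s^\ast>0$, I would argue that if $s^\ast = 0$ then \eqref{eqn:SDPobs_A} forces $s^\ast - rt^\ast\|LC\|_2 \ge 0$ together with $t^\ast\ge 0$, $\|LC\|_2 r > 0$ (using $r>0$), which already gives $t^\ast\le 0$, so $t^\ast = 0$ — contradicting the first step. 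Hence $(s^\ast,t^\ast)>0$ as claimed.

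With $(s^\ast,t^\ast)>0$ in hand I would perform the change of variables $\alpha = 1/t$, $\hat d = z/t$. Dividing \eqref{eqn:SDPobs_B} by $t>0$ and using the identity $tA_{z/t} = tA + (A_z - A)$ (noted already in Proposition~\ref{prop:smallestStateTrapping}) turns it into $A_{\hat d} + LC + \alpha I \preceq 0$, and $Qz=0$ becomes $Q\hat d = 0$, i.e.\ $\hat d\in\ker Q$, so by (A\ref{Assumption:A2}) we have $N(\hat d)\hat d = 0$. For the objective, I would rewrite \eqref{eqn:SDPobs_A} via Schur complement as the scalar quadratic constraint $\|LC(td - z) - Az\|_2 \le s - rt\|LC\|_2$; dividing through by $t$ gives $\alpha^{-1}\|LC(d-\hat d) - A\hat d\|_2 \le s\alpha^{-1}\cdot\alpha - r\|LC\|_2$, rearranged to $\alpha^{-1}\big(\|LC(d-\hat d) - A\hat d\|_2 + r\|LC\|_2\big) \le s$. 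Now the triangle inequality gives, for every $v\in B_r(d-\hat d)$, the bound $\|LCv - A\hat d - N(\hat d)\hat d\|_2 = \|LCv - A\hat d\|_2 \le \|LC(d-\hat d) - A\hat d\|_2 + \|LC\|_2\,\|v - (d-\hat d)\|_2 \le \|LC(d-\hat d) - A\hat d\|_2 + r\|LC\|_2$, so the right-hand side of the SDP constraint dominates $\alpha^{-1}\sup_{v\in B_r(d-\hat d)}\|LCv - A\hat d - N(\hat d)\hat d\|_2$. Minimizing $s$ over the SDP therefore gives an upper bound on the infimum on the right (this is the source of the ``$\ge$'' rather than ``$=$'': the supremum over the ball may be strictly smaller than the crude triangle-inequality bound). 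Invariance of $B_{s^\ast}(z^\ast/t^\ast) = B_{s^\ast}(\hat d^\ast)$ then follows directly from Lemma~\ref{lem:ObserverTrappingSet}, since $s^\ast$ exceeds the required radius \eqref{obsRadius}.

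For existence of a minimizer I would again reduce to a problem with compact feasible set without changing the optimal value, as in Proposition~\ref{prop:smallestStateTrapping}. Fixing any feasible $\bar s$ as an artificial upper bound $s\le\bar s$, the constraint \eqref{eqn:SDPobs_A} (in its Schur-complement scalar form) gives $\|LC(td - z) - Az\|_2 \le \bar s$, hence $\|Az - LC(z - td)\|_2 \le \bar s$; combined with $t\le\bar s/(r\|LC\|_2)$ (which follows from the diagonal entry of \eqref{eqn:SDPobs_A} being nonnegative) this bounds $\|Az\|_2$, and then a coercivity argument for $z\mapsto Az$ on $\ker Q$ bounds $\|z\|_2$ — here I note that since $A + LC$ is stable it is in particular nonsingular, but what one actually needs is an $\epsilon>0$ with $\|Az\|_2\ge\epsilon\|z\|_2$ on $\ker Q$; if $\ker A\cap\ker Q\ne\{0\}$ this can fail, so the cleanest route is to instead bound $\|(A+LC)z\|_2$ (using that its kernel is trivial) via $\|(A+LC)z\|_2 \le \|Az - LC(z-td)\|_2 + t\|LC\|_2\|d\|_2 \le \bar s + \bar t\|LC\|_2\|d\|_2$ once $t$ is bounded, giving $\|z\|_2\le\bar z$ for some $\bar z$. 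Finally, an upper bound $t\le\bar t$ compatible with \eqref{eqn:SDPobs_B} is obtained exactly as in Proposition~\ref{prop:smallestStateTrapping}: with $\xi$ the magnitude of the most negative eigenvalue of $\tfrac12(A+LC+(A+LC)\tpose)$ (positive, since $A+LC$ stable in the symmetric-part sense) and $\zeta := \sup_{\|z\|\le\bar z}\sigma_{\max}[I + \tfrac12((A_z-A)+(A_z-A)\tpose)]$, one may impose $t\le\bar t := \zeta/\xi$ without changing the minimum. Augmenting \eqref{eqn:SDPobs} with $s\le\bar s$, $t\le\bar t$, $\|z\|\le\bar z$ makes the feasible set compact, so a minimizer exists.

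The main obstacle I anticipate is getting the objective-translation step completely clean: \eqref{eqn:SDPobs_A} has the radius $r$ and $\|LC\|_2$ baked into both the off-diagonal block and the diagonal blocks, so one must be careful that after Schur-complementing and dividing by $t$ the constraint really is $\alpha^{-1}(\|LC(d-\hat d)-A\hat d\|_2 + r\|LC\|_2)\le s$ and that this dominates — via the triangle inequality over the ball $B_r(d-\hat d)$ — the quantity appearing in \eqref{obsRadius}; tracking the sign conventions and the fact that $\|LC(td-z)-Az\|_2 = \|LC(z-td) + Az\|_2$ up to sign is where an error is most likely to creep in. The $t>0$ and $(s^\ast,t^\ast)>0$ arguments and the compactness reduction are essentially routine adaptations of the earlier proposition.
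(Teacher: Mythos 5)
Your proposal follows essentially the same route as the paper's proof: $t>0$ via the energy-preserving property, $s>0$ from the diagonal block of \eqref{eqn:SDPobs_A}, the change of variables $\alpha=1/t$, $\hat d=z/t$ with a Schur complement and the triangle inequality over $B_r(d-\hat d)$ to connect to Lemma~\ref{lem:ObserverTrappingSet}, and a compactness reduction using $\ker(A+LC)=\{0\}$ for existence of a minimizer. The only (minor) omission is that your $s^\ast>0$ argument and the bound $t\le \bar s/(r\norm{LC}_2)$ implicitly assume $LC\neq 0$; the paper disposes of the $LC=0$ case in one line by reducing to Proposition~\ref{prop:smallestStateTrapping}.
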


\begin{proof}
If $LC=0$, then $A$ is stable and the result follows from Proposition \ref{prop:smallestStateTrapping}. 

Now suppose that $LC \neq 0$. Using the same argument as in the proof of Proposition~\ref{prop:smallestStateTrapping}, one can show that any feasible point of \eqref{eqn:SDPobs} must satisfy $t > 0$.  The constraint \eqref{eqn:SDPobs_A} then requires \mbox{$s \ge rt\norm{LC}_2 > 0$}.

Noting that $Q\hat d = 0 \Rightarrow N(\hat d)\hat d = 0$ by assumption, the smallest invariant set radius $\bar r$ satisfying the inequality \eqref{obsRadius} for a given $\alpha$ simplifies to
\begin{align}
\bar r &= \sup_{v \in B_r(d-\hat d) }\alpha^{-1} \norm{LCv - A\hat d}_2 \nonumber\\
	   &= \sup_{v \in B_r(0)}\alpha^{-1} \|LCv + LC(d-\hat d) - A\hat d \|_2 \nonumber\\
	   &\le r\alpha^{-1}\|LC\|_2 + \alpha^{-1}\|LC(d-\hat d) - A\hat d \|_2 \label{obsRadius_CSbound}.
\end{align}

Upper bounding \eqref{obsRadius_CSbound} by $s$ and substituting $t = 1/\alpha$ and $z = \hat d/\alpha$, results in 
\[
\bar r \leq rt \|LC\|_2 + \|LC t d- (A+LC)z\|_2	  \leq s.
\]
and applying a Schur complement identity produces the equivalent linear matrix inequality~\eqref{eqn:SDPobs_A}.    One may likewise confirm that the inequality $A_{\hat d} + LC +\alpha I \preceq 0$ is equivalent to~\eqref{eqn:SDPobs_B}.  Invariance then follows from Lemma \ref{lem:ObserverTrappingSet}.

To establish the existence of a minimizer, we first show that the SDP \eqref{eqn:SDPobs} is equivalent to one with compact constraints.  Any value $s = \bar s$ at any feasible point can be used as an upper bound on $s$, which allows an additional constraint $t \le \bar t := \bar s/(r\norm{LC}_2)$ to be imposed as a necessary condition for~\eqref{eqn:SDPobs_A}.
A further necessary condition for \eqref{eqn:SDPobs_A} is then
\[
\norm{(A+LC)z}_2 \le \bar s + \norm{LCd}\bar t.
\]
Noting that $\ker(A+LC) = \{0\}$ since $(A+LC)$ is assumed stable, the remainder of the proof proceeds as in the proof of Proposition~\ref{prop:smallestStateTrapping}.

\end{proof}

\begin{rem}
Note that the proofs of Propositions \ref{prop:smallestStateTrapping} and \ref{prop:smallestObsTrapping} are similar, but that the result of Proposition \ref{prop:smallestStateTrapping} produces a \emph{tight} bound on the invariant set radius for the state dynamics \eqref{nonlinsys}, whereas the result of Proposition \ref{prop:smallestObsTrapping} is conservative due to the application of the triangle inequality in \eqref{obsRadius_CSbound}.
\end{rem}


%

\subsection{Globally stable observers}

To study observer convergence, it is useful to rewrite the nonlinear part of the observer error dynamics \eqref{Err}. Starting from \eqref{eqn:nonlinError_v1}, for $x, \hat x \in \mathbb{R}^n$ the nonlinear error term is 
\begin{align}
N(x)x-N(\hat x)\hat x &=N(e)x + N(x)e - N(e)e	\nonumber \\
					 &=N(e)(x-e/2) + N(x-e/2)e	 \nonumber	\\
					 &= N(e)((x+\hat x)/2) + N((x+\hat x)/2)(e) \nonumber	\\
					 &= (A_{\frac{x+ \hat x}{2}} - A)e. \label{errGlobal}
\end{align}
Hence, the error dynamics \eqref{Err} can be written
\begin{equation} \label{ErrConcise}
\dot e = \left( A_{\frac{x+ \hat x}{2}} +LC  \right)e.
\end{equation}


The observer error dynamics can therefore be considered as a linear time varying system, and the problem of observer design is to find a gain $L$ which stabilizes \eqref{ErrConcise}.    Since both the state and error dynamics can be contained inside separate invariant sets $B_r(d)$ and $B_{\hat r}(\hat d)$ respectively, our objective is to identify a gain $L$ that stabilizes \eqref{ErrConcise} under the assumption that 
\begin{equation} \label{stateObsAve}
\left(x(t) + \hat x(t) \right) \in B_r(d) \oplus B_{\hat r}(\hat d), \qquad t \geq 0.
\end{equation}
The central difficulty is of course that the estimation error trapping set $B_{\hat r}(\hat d)$ is itself determined by the observer gain $L$.  We therefore propose a two-phase strategy, which we characterize formally in Algorithm \ref{alg:ObserverCvgAlg_new}.  

Our general approach is first to identify a trapping set $B_{r}(d)$ for the state dynamics \eqref{nonlinsys} using the method of Proposition \ref{prop:smallestStateTrapping}.  We then select some set $Y$ such that $B_r(d) \subsetneq Y$, and compute a gain $L$ such that $A_y+LC$ is stable for all $y \in Y$.  Using this gain, one can compute an observer invariant set $B_{\hat r}(\hat d)$ using the results of Proposition \ref{prop:smallestObsTrapping}.  If such a set exists and $(B_r(d) \oplus B_{\hat r}(\hat d))/2 \subseteq Y$, then \eqref{ErrConcise} is stable and $e(t) \rightarrow 0$.  

\begin{alg}[Observer design] \label{alg:ObserverCvgAlg_new}
\mbox{}
\begin{enumerate}
\item\label{alg:Obs_A} Use Proposition \ref{prop:smallestStateTrapping} to select $d \in \mathbb{R}^n, r>0$ such that $B_r(d)$ is invariant for \eqref{nonlinsys}. 
\item\label{alg:Obs_B} Select $(\alpha_1, \alpha_2) \geq 0$ and $Y \subset \mathbb{R}^n$ such that $B_r(d) \subsetneq Y$. Compute a positive definite $P \in \mathcal{S}^n$ and $R \in \mathbb{R}^{n \times p}$ such that:  
\begin{eqnarray}
P -\alpha_1 I & \succ & 0; \label{invP}\\
\left[ \begin{array}{cc} \alpha_2 I_n & RC \\ (\ast) & \alpha_2 I_p \end{array} \right] & \succeq &0 ; \label{RC}\\
PA_y + A\tpose_yP + RC + C\tpose R\tpose &\prec & 0, \qquad y\in Y.\label{dynSucc0}
\end{eqnarray}
Define $L := P^{-1} R$.
\item\label{alg:Obs_C} Use Proposition \ref{prop:smallestObsTrapping} to select $\hat d \in \mathbb{R}^n, \hat r>0$ such that $B_{\hat r}(\hat d)$ is invariant for \eqref{LuOb}. 

\item\label{alg:Obs_D} If 
$\frac{1}{2}\left[B_r(d) \oplus B_{\hat r}(\hat d)\right] \subset Y,$
then \eqref{ErrConcise} is stable and $e(t) \rightarrow 0$, whenever $\hat x_0 \in B_{\hat r}(\hat d)$.  

\end{enumerate}
\end{alg}


\begin{rem}

The tuning parameters $(\alpha_1,\alpha_2)$ appearing in Step \ref{alg:Obs_B}.~of Algorithm \ref{alg:ObserverCvgAlg_new} are included to provide control over $\|LC\|_2$. Recalling \eqref{obsRadius_CSbound} in the proof of Proposition \ref{prop:smallestObsTrapping}, $\|LC\|_2$ influences the radius of the observer invariant set calculated in Step \ref{alg:Obs_C}.   Minimizing the size of the this set is useful in helping to ensure that the set inclusion in Step \ref{alg:Obs_D}. is satisfied. 

To ensure that Proposition \ref{prop:smallestObsTrapping} can be used to construct an invariant set for the observer in Algorithm \ref{alg:ObserverCvgAlg_new}, Step \ref{alg:Obs_C}.~one must of course first verify that 
\[
\set{\alpha}{\alpha > 0, \exists \hat d \in \mathbb{R}^n,  A_{\hat d} + LC + \alpha I \prec 0} \neq \emptyset.
\]

\end{rem}

The fact that $R$ and $P$ are searched for simultaneously in \eqref{invP}--\eqref{dynSucc0} may mean that $\|LC\|_2$ is suboptimal, and consequently that Step \ref{alg:Obs_D}.~of Algorithm \ref{alg:ObserverCvgAlg_new} does not hold. In this situation, we propose the following iterative search for a globally convergent observer.

\begin{alg} \label{alg:ObserverCvgAlg_Iter}
$\mathrm{Initialization:}$ Suppose that Steps  \ref{alg:Obs_A}.~--~\ref{alg:Obs_C}. of  Algorithm \ref{alg:ObserverCvgAlg_new}  have been completed to provide:
\begin{enumerate}
\item $d \in \mathbb{R}^n, r>0$ such that $B_r(d)$ is invariant for \eqref{nonlinsys};
\item $P \in \mathcal{S}^n, R \in \mathbb{R}^{n \times p}$ and $Y \subset \mathbb{R}^n$ such that Step \ref{alg:Obs_B}. of Algorithm \ref{alg:ObserverCvgAlg_new} holds;
\item $\hat d \in \mathbb{R}^n, \hat r>0$ such that $B_{\hat r}(\hat d)$ is invariant for \eqref{LuOb}.
\end{enumerate}
Define $P_0:=P, L_0:=P^{-1}R, \hat d_0 :=\hat d, \hat r_0 := \hat r, \alpha_0 :=0$ and $\beta_0 := 0$. 

$\mathrm{Iteration:}$ repeat until $\frac{1}{2}\left[B_r(d) \oplus B_{\hat r_k}(\hat d_k)\right] \subset Y$: 
\begin{enumerate}
\item\label{alg:Iter_A} Suppose that $\alpha^\ast >0$ and $P^\ast \in \mathcal{S}^n$ are an optimal solution to the semidefinite program
\begin{align*}
\mathrm{maximize}		\quad & \alpha \nonumber \\
\mathrm{subject} \; \mathrm{to} \quad & 
P \succeq \alpha I\\
\quad & P(A_y+L_k C) + (A_y + L_k C)^\top P \preceq 0, \qquad y \in Y.
\end{align*}
Let $P_{k+1}:=P^\ast, \alpha_{k+1}:=\alpha^\ast$. 

\item\label{alg:Iter_B} Suppose that $\beta^\ast>0$ and $L^\ast \in \mathbb{R}^{n \times p}$ are an optimal solution to the semidefinite program
\begin{align*}
\mathrm{minimize} \quad & \beta \nonumber \\
\mathrm{subject} \; \mathrm{to}  \quad &
\left[ \begin{array}{cc} \beta I_n & LC \\ (\ast) & \beta I_p \end{array} \right]  \succeq  0 \\
& P_{k+1}(A_y + LC) + (A_y + LC)^\top P_{k+1} \preceq 0, \qquad y \in Y.
\end{align*}
Let $L_{k+1}:=L^\ast, \beta_{k+1} := \beta^\ast$.

\item\label{alg:Iter_C} Apply Proposition  \ref{prop:smallestObsTrapping} with $L=L_{k+1}$ to select $\hat d_{k+1} \in \mathbb{R}^n, \hat r_{k+1}>0$ such that $B_{\hat r_{k+1}}(\hat d_{k+1})$ is invariant for \eqref{LuOb}. 

\item\label{alg:Iter_D} If $\frac{1}{2} \left[ B_r(d) \oplus B_{\hat r_{k+1}}(\hat d_{k+1}) \right] \subset Y$, then $e(t) \rightarrow 0$ for any $\hat x_0 \in B_{\hat r_{k+1}}(\hat d_{k+1})$. 

\end{enumerate}

\end{alg}

\begin{rem} \label{rem:IterativeAlg}
At each stage of the iterative proceedure, $(P_k, \lambda_{\mathrm{min}}(P_k) )$ is feasible for the SDP in Step \ref{alg:Iter_A}., while $(L_k,\|L_k C\|_2)$ is feasible for the SDP in Step \ref{alg:Iter_B}. Furthermore, for each $k \geq 1$, we have the bound $\|L_k C\|_2 \leq \beta_k / \alpha_k$. 
\end{rem}

An alternative to the iterative method of Algorithm \ref{alg:ObserverCvgAlg_Iter} is to search for $P$ over a particular subset of $\mathcal{S}^n$, defined in terms of the non-linearity $N$. If this subset is well chosen, it is possible to remove the need to find $B_{\hat r}(\hat d)$. Define
\begin{equation*}
\mathcal{S}_{\!N}^n := \{ P \in \mathcal{S}^n : e\tpose P N(e)e = 0,  \; \text{for each} \; e \in \mathbb{R}^n \}. 
\end{equation*} 
Since the energy preserving property \eqref{energy} holds, it is the case that $\mathcal{S}_{\! N}^n \neq \emptyset$. Notice also that, since $e\tpose PN(e)e$ is linear in $P$, it is easy to calculate $\mathcal{S}_{\! N}^n$ for a given non-linearity $N$. The following result provides conditions for global observer convergence. 

\begin{thm} \label{thm:GlobalCvg}
Suppose that $B_r(d)$, calculated by Proposition \ref{prop:smallestStateTrapping}, is invariant for \eqref{nonlinsys} and let $x_0 \in B_r(d)$. Pick $Y \subseteq \mathbb{R}^n$ such that $B_r(d) \subset Y$ and suppose that there exists a positive definite $P \in \mathcal{S}_{\! N}^n$ and $R \in \mathbb{R}^{n \times p}$ such that 
\begin{equation} \label{GlobalPrec}
PA_y + A_y^\top P + RC + C^\top R^\top \prec 0, \qquad y \in Y.
\end{equation}
Then if $L:=P^{-1}R$, the observer $(\hat x(t))_{t \geq 0}$ defined by \eqref{LuOb} satisfies $e(t) \rightarrow 0$, $t\rightarrow \infty$, for any initial condition $\hat x_0 \in \mathbb{R}^n$. 
\end{thm}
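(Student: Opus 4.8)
The plan is to proceed exactly as in the proof of Theorem~\ref{thm:LocalObserver}, but to exploit membership $P \in \mathcal{S}_{\!N}^n$ to eliminate the obstructive nonlinear term, thereby obtaining global rather than local convergence. First I would use the identity \eqref{errGlobal} to write the error dynamics in the concise linear-time-varying form \eqref{ErrConcise}, namely $\dot e = (A_{(x+\hat x)/2} + LC)e$. Alternatively, and more directly for the Lyapunov argument, I would start from the rewriting $\dot e = (A_{x(t)} + LC)e - N(e)e$ already derived in the proof of Theorem~\ref{thm:LocalObserver} via \eqref{eqn:nonlinError_v1}.

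Next I would take the candidate Lyapunov function $V(e) := e\tpose P e$ and differentiate along trajectories. Using $\dot e = (A_{x(t)} + LC)e - N(e)e$ we get
\begin{equation*}
\dot V(e) = e\tpose\bigl(P(A_{x(t)}+LC) + (A_{x(t)}+LC)\tpose P\bigr)e - 2 e\tpose P N(e)e.
\end{equation*}
The crucial point is that, because $P \in \mathcal{S}_{\!N}^n$, the term $e\tpose P N(e)e = 0$ for every $e \in \mathbb{R}^n$, so the nonlinear contribution vanishes identically — in contrast to the local case where it had to be dominated for small $\|e\|_2$. Since $B_r(d)$ is invariant and $x_0 \in B_r(d)$, we have $x(t) \in B_r(d) \subseteq Y$ for all $t \ge 0$, so \eqref{GlobalPrec} applies with $y = x(t)$. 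Writing $R = PL$ (equivalently $RC + C\tpose R\tpose = PLC + (LC)\tpose P$), \eqref{GlobalPrec} gives $P(A_{x(t)}+LC) + (A_{x(t)}+LC)\tpose P \preceq -\alpha_3 I$ for some $\alpha_3 > 0$ (by a compactness/continuity argument over the compact set $\overline{Y}$, or simply by taking $\alpha_3$ from strictness of \eqref{GlobalPrec} if $Y$ is taken compact; since $B_r(d)$ is compact one may choose $Y$ compact without loss of generality). Hence $\dot V(e) \le -\alpha_3 \|e\|_2^2$ for all $t \ge 0$, with no restriction on the size of $e$.

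Finally I would combine this with the two-sided bound on $P$. Positive definiteness of $P$ gives constants $0 < \alpha_1 \le \alpha_2$ with $\alpha_1 I \preceq P \preceq \alpha_2 I$, so $\alpha_1\|e\|_2^2 \le V(e) \le \alpha_2 \|e\|_2^2$ and therefore $\dot V(e) \le -(\alpha_3/\alpha_2) V(e)$. Grönwall's inequality yields $V(e(t)) \le V(e_0)\,e^{-\alpha_3 t/\alpha_2}$ and hence
\begin{equation*}
\|e(t)\|_2 \le \sqrt{\frac{V(e_0)}{\alpha_1}}\,e^{-\frac{\alpha_3 t}{2\alpha_2}} \le \sqrt{\frac{\alpha_2}{\alpha_1}}\,\|e_0\|_2\, e^{-\frac{\alpha_3 t}{2\alpha_2}} \longrightarrow 0, \qquad t \to \infty,
\end{equation*}
for any $\hat x_0 \in \mathbb{R}^n$, which is the claim.

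I do not anticipate a serious obstacle: the argument is essentially the local proof with the nonlinear term killed by the structural constraint $P \in \mathcal{S}_{\!N}^n$. The only point requiring a little care is extracting a uniform $\alpha_3 > 0$ from the strict inequality \eqref{GlobalPrec} over the (possibly non-compact as stated) set $Y$; this is handled either by noting that only $y = x(t) \in B_r(d)$ is ever needed, so the supremum of the left-hand side of \eqref{GlobalPrec} over the compact set $B_r(d)$ is attained and is negative definite, or by assuming $Y$ compact from the outset — consistent with how $Y$ is chosen in the algorithms. One should also confirm that $P$ invertible makes $L := P^{-1}R$ well defined and that $RC + C\tpose R\tpose = PLC + C\tpose L\tpose P$, which is immediate.
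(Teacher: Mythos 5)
Your proof is correct and follows essentially the same route as the paper: both arguments reduce $\dot V$ for $V(e)=e\tpose Pe$ to the quadratic form of $P(A_{x(t)}+LC)+(A_{x(t)}+LC)\tpose P$ by using $P\in\mathcal{S}_{\!N}^n$ to annihilate the $e\tpose PN(e)e$ term, and then invoke \eqref{GlobalPrec} at $y=x(t)\in B_r(d)\subset Y$. If anything, your treatment is slightly more careful than the paper's, which stops at $\dot V<0$, whereas you extract a uniform decay rate $\alpha_3>0$ from compactness of $B_r(d)$ and conclude via Gr\"onwall; this added step is genuinely needed for the conclusion $e(t)\to 0$ and is a welcome refinement rather than a deviation.
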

\begin{proof}
Let $V(e) = e\tpose P e$. Then by \eqref{errGlobal} and \eqref{ErrConcise},
\begin{align*}
					\nabla V \cdot \dot e		& =			e\tpose P ( A_{\frac{x+\hat x}{2}} + LC)e  \\
																	& = 		e\tpose P ( A + LC)e \\
																	&	\qquad \qquad+ e\tpose P(N(x)e + N(e)x - N(e)e) \\
(P \in \mathbb{S}_N^n)						& = 		e\tpose P (A_x + LC)e. 										 \\
\text{(by \eqref{GlobalPrec})}		& < 	0, \qquad e \in \mathbb{R}^n.
\end{align*}
Hence, $e(t) \rightarrow 0, t \rightarrow \infty$ and since $\hat x$ does not appear in the expression for $\nabla V \cdot \dot e$, the observer error converges to zero for any initial condition $\hat x_0 \in \mathbb{R}^n$. 
\end{proof}

\subsection{Modeling of Robust LMI Conditions}\label{sec:RobustLMI}

In order to apply the results of Theorem \ref{thm:LocalObserver}, Algorithms \ref{alg:ObserverCvgAlg_new} and \ref{alg:ObserverCvgAlg_Iter}, or Theorem \ref{thm:GlobalCvg}, it is necessary to construct matrices $P$ and $R$ such that the semi-infinite matrix inequality 
\begin{equation} \label{robustLMI:eqn}
PA_y + A_y^\top P + RC + C^\top R^\top \preceq -\alpha I, \qquad y \in Y,
\end{equation}
is satisfied for some compact set $Y \subseteq  \mathbb{R}^n$ and for some $\alpha > 0$.  We next comment on methods for modeling such a constraint as a finite-dimensional LMI.

Suppose that $Y = B_r(d)$, so that \eqref{robustLMI:eqn} can be rewritten as 
\[
PA_d + A_d^\top P + RC + C^\top R^\top  + P(A_\delta - A) + (A_\delta - A)^\top P\preceq -\alpha I, \qquad  \norm{\delta}_2\le r.
\]
Define $\xi := (P,R,\alpha)$, a matrix $F^{(0)}(\xi)\in \mathcal{S}^n$ as
\[
F^{(0)}(\xi) := -\left(PA_d + A_d^\top P + RC + C^\top R^\top + \alpha I\right), 
\]
and matrices $F^{(i)}(\xi)\in \mathcal{S}^n$ for $i \in \{1,\dots, n\}$ such that
\[
-P(A_\delta - A) - (A_\delta - A)^\top P =: \sum_{i=1}^n\delta_iF^{(i)}(\xi).
\]  
The robust LMI condition \eqref{robustLMI:eqn} can be rewritten in this notation as
\[
F^{(0)}(\xi) + \sum_{i=1}^n\delta_iF^{(i)}(\xi) \succeq 0, \qquad \norm{\delta}_2 \le r,
\]
where each of the matrices $F^{(i)}(\xi)$ is linear in $\xi$.  We can then exploit a result from robust semidefinite programming to establish a sufficient condition for satisfaction of \eqref{robustLMI:eqn}.

\begin{prop}[{\cite[Thm.\ 2.1]{BenTal}}]  If $Y = B_r(d)$, then the robust LMI \eqref{robustLMI:eqn} is satisfied if there exists $\alpha > 0$, $Q \in \mathcal{S}^n$ and $S\in \mathcal{S}^n$ such that $(S + Q) \preceq 2F^{(0)}(\xi)$ and 
\[
\begin{bmatrix}
S & r F^{(1)}(\xi) & \cdots & r F^{(n)}(\xi) \\
\rho F^{(1)}(\xi) & Q \\
\vdots &&\ddots \\
\rho F^{(n)}(\xi) &&& Q
\end{bmatrix}
\succeq 0. 
\]
\end{prop}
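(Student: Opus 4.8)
The plan is to derive this as a direct consequence of the positive semidefiniteness of the displayed block matrix, following the S-procedure-style argument behind \cite[Thm.\ 2.1]{BenTal}. Write $M \in \mathcal{S}^{n(n+1)}$ for that block matrix, with top-left block $S$, off-diagonal blocks $rF^{(i)}(\xi)$ in the first block row and column, and $Q$ in each of the remaining $n$ diagonal positions. The first step is to recast the robust LMI \eqref{robustLMI:eqn} in scalar form: because the constraint set $\{\delta \in \mathbb{R}^n : \norm{\delta}_2 \le r\}$ is symmetric about the origin, the requirement $F^{(0)}(\xi) + \sum_{i=1}^n \delta_i F^{(i)}(\xi) \succeq 0$ for all $\norm{\delta}_2 \le r$ is equivalent to
\[
u\tpose F^{(0)}(\xi)\, u \;\ge\; u\tpose\Bigl(\textstyle\sum_{i=1}^n \delta_i F^{(i)}(\xi)\Bigr) u, \qquad u \in \mathbb{R}^n,\ \ \norm{\delta}_2 \le r,
\]
so it suffices to prove this inequality for every such $u$ and $\delta$.

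Next, I would note that $Q \succeq 0$, since it is a principal submatrix of $M \succeq 0$. Then, fixing $u \in \mathbb{R}^n$ and $\delta$ with $\norm{\delta}_2 \le r$, I would evaluate the quadratic form of $M$ at the test vector $w := \bigl(u;\, -\tfrac{\delta_1}{r} u;\, \dots;\, -\tfrac{\delta_n}{r} u\bigr)$. Expanding $0 \le w\tpose M w$ using the block structure gives
\[
0 \;\le\; u\tpose S u \;-\; 2\, u\tpose\Bigl(\textstyle\sum_{i=1}^n \delta_i F^{(i)}(\xi)\Bigr) u \;+\; \frac{\norm{\delta}_2^2}{r^2}\, u\tpose Q u .
\]
Since $u\tpose Q u \ge 0$ and $\norm{\delta}_2^2/r^2 \le 1$, the last term is at most $u\tpose Q u$, so $2\, u\tpose(\sum_i \delta_i F^{(i)}(\xi)) u \le u\tpose(S+Q) u$; invoking the hypothesis $S + Q \preceq 2 F^{(0)}(\xi)$ then gives $u\tpose(\sum_i \delta_i F^{(i)}(\xi)) u \le u\tpose F^{(0)}(\xi) u$, which is exactly the scalar inequality above. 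That closes the argument.

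There is no serious obstacle here: once the robust LMI is rewritten in scalar form, everything reduces to a one-line expansion of $w\tpose M w \ge 0$. The one step that takes a moment of thought is the choice of the test vector $w$, whose blocks $-\tfrac{\delta_i}{r}u$ are tuned so that the cross terms $2r\,u\tpose F^{(i)}(\xi) w_i$ cancel the linear-in-$\delta$ terms exactly, leaving behind only a residue controlled by $\norm{\delta}_2^2/r^2 \le 1$ together with $Q \succeq 0$. No separate analysis of the case $\norm{\delta}_2 < r$ (or $\delta = 0$) is required. A shorter alternative is simply to verify that the data $F^{(0)}(\xi), F^{(1)}(\xi), \dots, F^{(n)}(\xi)$ meet the hypotheses of \cite[Thm.\ 2.1]{BenTal} and cite that theorem directly.
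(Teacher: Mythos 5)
Your proof is correct, and it is worth noting that the paper offers no proof at all for this statement: it is imported verbatim as a citation of \cite[Thm.~2.1]{BenTal}. What you have supplied is therefore a genuinely self-contained, elementary verification of the cited result in this special case, and every step checks out. The reduction to the scalar form $u\tpose F^{(0)}(\xi)u \ge u\tpose\bigl(\sum_i \delta_i F^{(i)}(\xi)\bigr)u$ is legitimate precisely because the ball $\norm{\delta}_2\le r$ is symmetric under $\delta\mapsto-\delta$; the test vector $w=(u;\,-\tfrac{\delta_1}{r}u;\dots;-\tfrac{\delta_n}{r}u)$ makes the cross terms of $w\tpose M w$ produce exactly $-2\sum_i\delta_i\,u\tpose F^{(i)}(\xi)u$ while the diagonal blocks contribute $\tfrac{\norm{\delta}_2^2}{r^2}u\tpose Qu$; and the final chain $2u\tpose(\sum_i\delta_iF^{(i)}(\xi))u\le u\tpose(S+Q)u\le 2u\tpose F^{(0)}(\xi)u$ uses only $Q\succeq0$ (a principal submatrix of $M$) and the hypothesis $S+Q\preceq 2F^{(0)}(\xi)$. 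Two small remarks: you have tacitly (and correctly) read the symbol $\rho$ in the lower-left blocks of the displayed matrix as a typo for $r$, since the matrix must be symmetric for the statement to make sense; and your argument only ever uses $M\succeq0$ and $Q\succeq0$, so it in fact establishes the sufficiency direction of the Ben-Tal--Nemirovski approximation without any S-procedure machinery. Compared with simply citing the theorem, your direct computation buys transparency and makes the role of each hypothesis visible, at the cost of not recovering the tightness/approximation-ratio content of the original result, which is not needed here.
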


In the more general case that $Y \subseteq \text{conv} \{ y_i, i=1,\ldots,M \}$, one can of course also guarantee satisfaction of the constraint \eqref{robustLMI:eqn} by ensuring its satisfaction  at every vertex $y_i \in Y$.

\section{Examples} \label{examplesSec}

We give two examples of observer design for finite dimensional systems related to fluid flows; the Lorenz attractor and a low order model for shear flow between two parallel plates.

\subsection{Lorenz Attractor}

The Lorenz attractor \cite{Lorenz} is a dynamical system in $\mathbb{R}^3$, which is a simplified model of fluid convection in two spatial dimensions. We consider the classical Lorenz dynamics which can be written in the form \eqref{nonlinsys} for 
\begin{align*}
A 		&	:= 	\left( \begin{array}{ccc} -10 & 10 & 0 \\ 28 & -1 & 0 \\ 0 & 0 & -8/3 \end{array} \right),  \\
N(x)	&	:=	\left( \begin{array}{ccc} 0 & 0 & 0 \\ 0 & 0 & -x_1 \\ 0 & x_1 & 0 \end{array} \right), \quad x \in \mathbb{R}^3.
\end{align*}
Note that non-linearity $N$ satisfies the energy preserving property \eqref{energy}. 
Suppose that it is possible to observe only the second state: 
\begin{equation*}
y = Cx, \qquad C := \left(\begin{array}{ccc} 0 & 1 & 0 \end{array} \right). 
\end{equation*} 
A convergent observer can be constructed using the iterative method of Algorithm \ref{alg:ObserverCvgAlg_Iter}. Proposition \ref{prop:smallestStateTrapping} implies that $B_{r}(d)$ is invarient for $(x(t))_{t \geq 0}$ with 
\[
r:=100.7,\qquad  d:= \left( \begin{array}{ccc} 0 & 0 & 37.5 \end{array} \right)^\top. 
\]
Select a conservative bounding set $Y = B_{r_0}(d_0)$ with $r_0:=1200$ and $d_0:=d$. After five iterations, Algorithm \ref{alg:ObserverCvgAlg_Iter} provides matrices\footnote{The additional condition $P \preceq 10^3 I$ was imposed to improve convergence.}
\[
L_5 = \left( \begin{array}{ccc} -10.0 & -13.3 & 0 \end{array} \right), \qquad P_5 = \text{diag}\left( \begin{array}{ccc} 1000.0 & -0.1 & -0.1 \end{array} \right)
\]
which satisfy $P_5(A_y+L_5 C) + (A_y + L_5 C)^\top P_5 \prec 0, y \in Y$. The set $B_{\hat r_5}(\hat d_5)$ is invariant for the observer dynamics \eqref{LuOb} with
\[
\hat r_5 = 1282.6, \qquad \hat d_5 = \left( \begin{array}{ccc} 0 & 0 & 9.2 \end{array} \right)^\top 
\]
and it can be easily verified that $\frac{1}{2} \left[ B_r(d) \oplus B_{\hat r_5} (\hat d_5) \right] \subset Y$. Hence, Algorithm \ref{alg:ObserverCvgAlg_Iter} implies that $\hat x(t) \rightarrow x(t), t \rightarrow \infty$ whenever 
\[
(x_0,\hat x_0) \in B_r(d) \times B_{\hat r_5}(\hat d_5). 
\]

Alternatively, Theorem \ref{thm:GlobalCvg} can be used to construct a globally convergent observer. For the Lorenz attractor, we have 
\begin{equation*}
\mathcal{S}_N^n = \text{span} \left\{ \text{diag}\left(\begin{array}{ccc} 1 & 0 & 0 \end{array} \right), \text{diag}\left( \begin{array}{ccc}  0 & 1 & 1 \end{array} \right) \right\}.
\end{equation*}  	
and it is interesting to note that the matrix $P_5$ constructed by Algorithm \ref{alg:ObserverCvgAlg_Iter} is an element of $\mathcal{S}_N^n$. 
Applying Algorithm \ref{alg:ObserverCvgAlg_new}, Step \ref{alg:Obs_B}.~with the restrictions $P \in \mathcal{S}^n_N, P \prec 10^3 I$ provides matrices 
\begin{equation} \label{eqn:Lgain}
L= \left( \begin{array}{ccc}   -9.6& -704.4 & 0 \end{array} \right)^\top, \qquad P =	\text{diag}\left(\begin{array}{ccc} 132.4 &0.8 &0.8 \end{array} \right) \in \mathcal{S}_N^n
\end{equation}
which satisfy $P(A_{y} +LC) + (A_{y} +LC)^\top P  \prec 0, y \in B_r(d)$. Hence, Theorem \ref{thm:GlobalCvg} implies that the resulting observer is globally convergent for any initial value $\hat x_0 \in \mathbb{R}^3$. An example of the performance of the two globally convergent observers is shown in Figure \ref{LorzFig}. 

\begin{figure*}[h!]
\begin{center} 
\includegraphics[scale=0.53,trim = 1.75cm 2cm 1.3cm 1.5cm]{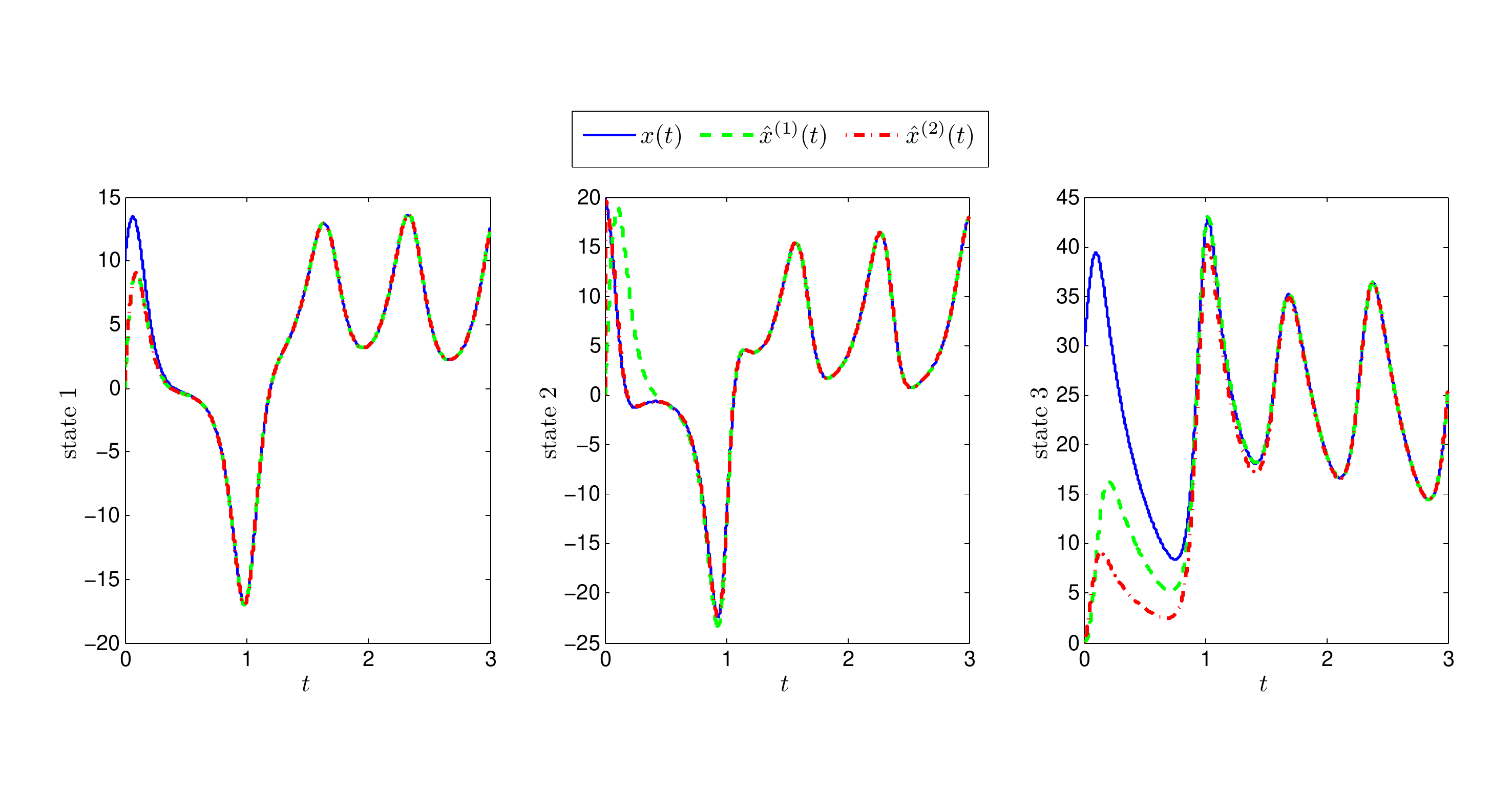}
\caption{\label{LorzFig} The Lorenz attractor is $(x(t))_{t \geq 0}$ with initial condition  $(10 \;20 \;30)^\top$. The observer with gain $L_5$ is $(x^{(1)}(t))_{t \geq 0}$; the observer with gain \eqref{eqn:Lgain} is $(x^{(2)}(t))_{t \geq 0}$. Both observers have initial condition $(0 \; 0 \; 0)^\top$. }
\end{center}
\end{figure*}
Observer design for the Lorenz attractor is considered in \cite{Krener}, where the co-ordinate transformation approach is used. This approach creates an observer which appears to converge experimentally, but the complexity of the co-ordinate transformation means that it is not possible to {\em prove} convergence. 

For the Lipschitz approach, suppose there exists $\gamma >0, P \in \mathcal{S}^n$ and $R \in \mathbb{R}^{p \times n}$ such that 
\begin{equation} \label{lipRicatti}
\left[ \begin{array}{cc} PA+A\tpose P  + RC + C\tpose R\tpose & P \\ (\ast) & -I/\gamma^2 \end{array} \right] \prec 0.
\end{equation}
It is easy to deduce (see e.g. \cite{Hu,Rajamani2}) that if $S$ is an invariant set for the state and the non-linearity satisfies the Lipschitz condition
\begin{equation*}
\| N(x)x -N(y)y \|_2 \leq \gamma \|x-y\|_2, \qquad x,y \in S,
\end{equation*}
then \eqref{LuOb}, for $L = P^{-1}R$, is a convergent observer. With respect to the Lorenz dynamics, the largest $\gamma>0$ satisfying \eqref{lipRicatti} is $\gamma = 2.67$. However, letting $x = (x_1,x_2,x_3), y = (y_1,x_2,x_3)$ implies that 
\begin{equation*}
\frac{ \|N(x)x-N(y)y\|_2}{\|x-y\|_2} = \sqrt{x_2^2 + x_3^2}. 
\end{equation*}
It is known that there exists $x$ in the range of the Lorenz attractor for which $\sqrt{x_2^2 + x_3^2} > \sqrt{1500}$ and hence, the Lipschitz approach (e.g. from \cite{Aboky}) cannot be used to construct a convergent observer for the Lorenz attractor.  

\subsection{Low order model for shear fluid flow} \label{examplesNineSec}

We consider observer design for the finite dimensional fluid flow model presented in \cite{Moehlis1}. The model is derived from the Navier-Stokes equations by the method of Galerkin projection. Before considering the example, we explain how this method necessarily results is a system of the form \eqref{nonlinsys} with nonlinear term satisfying the energy preserving property \eqref{energy}. 

The incompressible Navier-Stokes equations for a vector field $\bs{u} : \Omega \subset \mathbb{R}^3 \rightarrow \mathbb{R}^3$, are
\begin{align*}
\frac{\partial \bs{u}}{\partial t} + (\bs{u} \cdot \nabla) \bs{u} &= - \nabla p + \frac{1}{Re} \nabla^2 \bs{u} + \bs{f},\\
																					 \nabla \cdot \bs{u}  &= 0,
\end{align*}
where $p:\Omega \rightarrow \mathbb{R}$ represents the pressure, $\bs{f}:\Omega \rightarrow \mathbb{R}^3$ an external force and $Re$ the Reynold's number of the flow. No-slip boundary conditions ($\bs{u}_{|_{\partial \Omega}} = 0$) are also assumed. A common assumption \cite{Ravindran2,Ravindran} is that the flow field can be decomposed in the form 
\begin{equation} \label{flowExpand:eqn}
\bs{u}(x,t) = \sum_{i=1}^\infty a_i(t) \bs{u}_i(x),
\end{equation}
and a finite dimensional approximation of the flow obtained by considering the truncation $\bs{u}= \sum_{i=1}^n a_i \bs{u}_i$. 

A set of ordinary differential equations for the time-dependent coefficients $a_i$ can be obtained via the method of Galerkin projection (see e.g. \cite[pp.~129--154]{Berkooz}), leading to
\begin{equation} \label{TemporalODE}
\dot a_i  = \frac{\langle \bs{f}, \bs{u}_i \rangle}{\|\bs{u}_i\|^2} - \frac{\lambda_i}{Re}a_i + \sum_{j,k} \frac{a_j a_k}{\|\bs{u}_i\|^2} \langle (\bs{u}_j \cdot \nabla) \bs{u}_k, \bs{u}_i \rangle, \qquad i = 1,\ldots,n,
\end{equation}
where $\lambda_i >0$ are fixed constants. To remove the constant term from \eqref{TemporalODE} it is assumed that there exists a known stationary point $a = c$. Making the transformation $x = a-c$, the perturbations about $c$ have dynamics of the form \eqref{nonlinsys} with linear part
\begin{equation} \label{fluidLinear:eqn}
Ax := \frac{1}{Re}\Lambda x + N(c)x + N(x)c, \qquad x \in \mathbb{R}^n, 
\end{equation}
for $\Lambda  = -\text{diag} \left( \begin{array}{ccc} \lambda_1 & \cdots &\lambda_n\end{array} \right) \prec 0$ and nonlinear term 
\[
N(x) = \sum_{i=1}^n x_i \mathcal{Q}^{(i)}, \qquad \mathcal{Q}^{(i)} := \left( \|\bs{u}_i\|^{-2} \langle (\bs{u}_j \cdot \nabla) \bs{u}_k, \bs{u}_i \rangle \right)_{j,k=1}^n. 
\]
As a consequence of the incompressibility and no-slip assumptions, 
\[
\langle (\bs{u}_j \cdot \nabla) \bs{u}_k, \bs{u}_i \rangle = -  \langle (\bs{u}_i \cdot \nabla) \bs{u}_k, \bs{u}_j \rangle,
\]
which implies that the matrices $\mathcal{Q}^{(i)}$ are anti-symmetric. Hence, the nonlinearity $N$ satisfies \eqref{energy}. 

As a first step towards designing an observer, we construct an invariant set for the state dynamics. Although Proposition \ref{prop:smallestStateTrapping} can be applied, the particular structure  of the linear term \eqref{fluidLinear:eqn} implies that a natural invarient set can be easily constructed.


\begin{lem} \label{lem:ODETrappingSet}
Suppose that $(x(t))_{t \geq 0}$ satisfies \eqref{nonlinsys} with linear part of the form \eqref{fluidLinear:eqn}, for some $c \in \mathbb{R}^n$, and nonlinear part satisfying \eqref{energy}. Then there exists $r>0$ such that $B_r(-c)$ is invariant for $(x(t))_{t \geq 0}$. 
\end{lem}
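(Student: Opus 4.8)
The plan is to reduce this directly to Lemma~\ref{lem:StateTrappingSet} by choosing the centre $d = -c$. First I would compute the perturbed matrix $A_{-c}$ from its definition $A_{-c}x = Ax + N(x)(-c) + N(-c)x$. Substituting the fluid-specific form \eqref{fluidLinear:eqn}, namely $Ax = \frac{1}{Re}\Lambda x + N(c)x + N(x)c$, and using linearity of $N$ (so that $N(-c)x = -N(c)x$), the two pairs of bilinear cross terms $\pm N(x)c$ and $\pm N(c)x$ cancel, leaving $A_{-c} = \frac{1}{Re}\Lambda$.

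Since $\Lambda = -\mathrm{diag}(\lambda_1,\dots,\lambda_n)$ with every $\lambda_i > 0$, the matrix $A_{-c}$ is symmetric negative definite, and setting $\alpha := \frac{1}{Re}\min_i \lambda_i > 0$ gives $A_{-c} + \alpha I \preceq 0$. This is exactly the hypothesis of Lemma~\ref{lem:StateTrappingSet} with $d = -c$, so I may invoke that lemma to conclude that $B_r(-c)$ is invariant for \eqref{nonlinsys} for any
\[
r \ge \frac{1}{\alpha}\bigl\| A(-c) + N(-c)(-c) \bigr\|_2 .
\]
By homogeneity of $x \mapsto N(x)x$ we have $N(-c)(-c) = N(c)c$, so the right-hand side is a finite nonnegative number; hence a suitable $r > 0$ exists, which is the claim. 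If a more explicit radius is wanted, one can also expand $Ac = \frac{1}{Re}\Lambda c + 2N(c)c$ to rewrite the bound as $r \ge \frac{1}{\alpha}\|\frac{1}{Re}\Lambda c + N(c)c\|_2$, but this is not needed for the existence statement.

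I do not anticipate a genuine obstacle: the substance is the algebraic observation that the linearisation \eqref{fluidLinear:eqn} is precisely built so that the perturbed matrix $A_d$ evaluated at $d=-c$ collapses to the diagonal dissipative part $\frac{1}{Re}\Lambda$. The only points requiring care are the sign bookkeeping in the cancellation of the $N(\cdot)c$ and $N(c)\cdot$ terms, and verifying that $\alpha$ may be taken strictly positive, which follows immediately from $\Lambda \prec 0$.
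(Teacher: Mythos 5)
Your proposal is correct and follows essentially the same route as the paper: both proofs hinge on the observation that the cross terms cancel so that $A_{-c} = \frac{1}{Re}\Lambda \prec 0$, and then invoke Lemma~\ref{lem:StateTrappingSet} with $d=-c$. The only difference is that you quote the lemma's radius bound \eqref{stateRadius} directly, while the paper re-derives the decrease condition to exhibit the exact ellipsoid $E$ outside of which $\|x+c\|_2^2$ decreases (yielding a sharper admissible radius, used later for comparison with Proposition~\ref{prop:smallestStateTrapping}); for the bare existence claim your argument suffices.
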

\begin{proof}
Note that $A_{-c} = \frac{1}{Re} \Lambda \prec 0$. By Lemma \ref{lem:StateTrappingSet} it follows that $\|x+c\|_2^2$ is decreasing if 
\begin{equation*}
\frac{1}{Re}(x+c)\tpose \Lambda (x+c)\tpose + (x+c)\tpose \left(-\frac{1}{Re} \Lambda c + N(c)c \right) < 0. 
\end{equation*}
Standard algebraic manipulation shows that the set of $x \in \mathbb{R}^n$ for which the above inequality holds is equal to $\mathbb{R}^n \setminus E$, where $E$ is the ellipsoid
\begin{align*}
E := \bigg\{ x \in \mathbb{Re}^n : \sum_{i=1}^n  \lambda_i  \bigg(  x_i & - \frac{ Re}{2 \lambda_i}(\Lambda c + N(c)c)_i \bigg)^2  \\
& \leq \sum_{i=1}^n  \frac{Re^2}{4 \lambda_i} (\Lambda c + N(c)c)^2_i  \bigg\}
\end{align*}
The result follows if $r>0$ is chosen such that $E \subset B_r(-c)$. 
\end{proof}

We now consider observer design for a low order model of shear fluid flow. For brevity, we refer to \cite[pp.~7--8]{Moehlis1} for an explicit description of the model\footnote{With respect to the system parameters in \cite{Moehlis1}, we select $\alpha = 1/2, \beta = \pi/2$ and $\gamma =1$.} and note that all subsequent calculations are performed for Reynolds number $Re = 60$.

For this system, the first vector field $\bs{u}_1$ appearing the expansion \eqref{flowExpand:eqn} coincides with the laminar solution to the flow, implying that $c=\bs{e}_1$. Since $N(\bs{e}_1)\bs{e}_1 =0$, Lemma \ref{lem:ODETrappingSet} implies that $B_r(-\bs{e}_1)$ is invariant for the system if 
\begin{align*}
E &:= \left\{ x \in \mathbb{R}^n : \lambda_1 \left(  x_1 +  \frac{1}{2} \right)^2 + \sum_{i=1}^n \lambda_i x_i^2 \leq  \frac{ \lambda_1}{4}   \right\} \subset B_{r}(-\bs{e}_1).
\end{align*}
In particular, $B_{\lambda_1 / \lambda_{{\rm min}}}(-\bs{e}_1) = B_1(-\bs{e}_1)$ is an invariant set.  In fact, applying Proposition \ref{prop:smallestStateTrapping} with $\ker{Q} := \text{span}(\bs{e}_1)$ implies that
\[
B_{\xi}(-\xi \bs{e_1}), \qquad \xi:=0.9477
\]
is an invariant set. Hence, Proposition \ref{prop:smallestStateTrapping} provides a tighter invariant set that than the natural one derived from Lemma \ref{lem:ODETrappingSet}. 

We assume that the first six states of the system can be observed, i.e. 
\[
C := \left( \begin{array}{ccc}  I_6 & \vdots& \mathcal{O} \end{array} \right),
\]
where $\mathcal{O} \in \mathbb{R}^{6 \times 3}$ has all entries equal to zero. Let $Y$ be a $1$-norm ball of radius $3\xi$ such that $B_\xi(-\xi \bs{e}_1) \subset Y$. Applying Algorithm \ref{alg:ObserverCvgAlg_Iter} with $10$ iterations provides an observer gain $L_{10}$ and $P_{10} \in \mathcal{S}^n$ for which 
\[
P_{10}(A_y + L_{10}C) + (A_y + L_{10}C)P_{10} \prec 0, \qquad y \in Y.
\]
Consequently, Theorem \ref{thm:LocalObserver} implies that the observer is locally convergent. The complexity of the system makes it unlikely that a globally stable observer can be constructed using our methods. However, Figure \ref{NineStateFig} shows the unobserved states of the locally stable observer can be seen to converge to the true system state. 

\begin{figure*}[h]
\begin{center} 
\includegraphics[scale=0.53,trim=3.5cm 0.5cm 3cm 0cm]{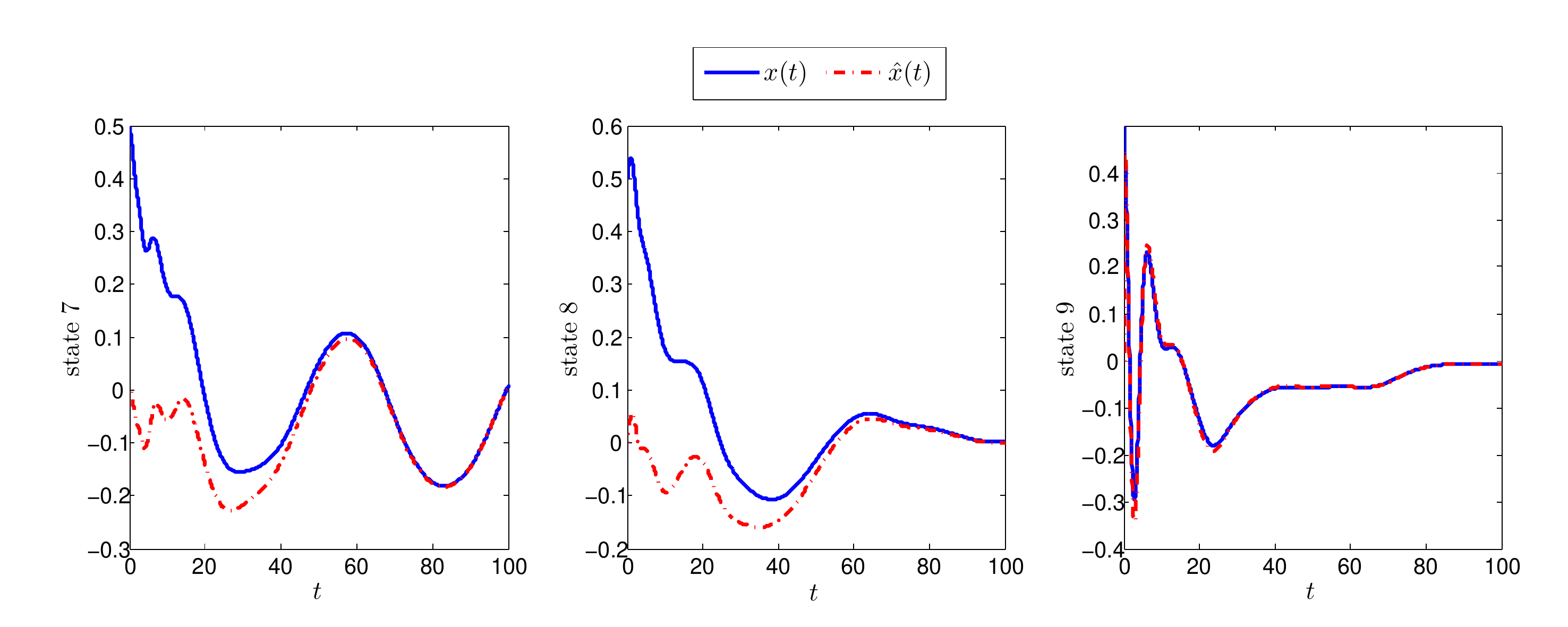}
\caption{\label{NineStateFig} The state fluid system with initial condition  $(-0.5 \; 0.5 \; 0.5 \; 0.5 \; 0.5 \; 0.5 \; 0.5 \; 0.5 \; 0.5)^\top$ is denoted $(x(t))_{t \geq 0}$. The observer with gain $L_{10}$ is $(\hat{x}(t))_{t \geq 0}$ with initial condition $(0 \; 0 \; 0 \; 0 \; 0 \; 0 \; 0 \; 0 \; 0)^\top$. }
\end{center}
\end{figure*}

\section{Conclusions}
A method of observer design has been presented for a class of non-linear systems whose non-linearity is energy preserving. Sufficient conditions, which can be verified by standard convex optimization methods, are given which imply either local or global observer convergence. The results are applied to create a globally convergent observer for the Lorenz attractor and a locally stable observer for a low order model of shear fluid flow. 

\bibliography{Observer}
\bibliographystyle{amsplain}

\end{document}